\def\tr{{\rm tr}}
\begin{document}

\newtheorem{problem}{Problem}

\newtheorem{theorem}{Theorem}[section]
\newtheorem{corollary}[theorem]{Corollary}
\newtheorem{definition}[theorem]{Definition}
\newtheorem{conjecture}[theorem]{Conjecture}
\newtheorem{question}[theorem]{Question}
\newtheorem{lemma}[theorem]{Lemma}
\newtheorem{proposition}[theorem]{Proposition}
\newtheorem{quest}[theorem]{Question}
\newtheorem{example}[theorem]{Example}

\newenvironment{proof}{\noindent {\bf
Proof.}}{\rule{2mm}{2mm}\par\medskip}

\newenvironment{proofof3}{\noindent {\bf
Proof of  Theorem 1.2.}}{\rule{2mm}{2mm}\par\medskip}

\newenvironment{proofof5}{\noindent {\bf
Proof of  Theorem 1.3.}}{\rule{2mm}{2mm}\par\medskip}

\newcommand{\remark}{\medskip\par\noindent {\bf Remark.~~}}
\newcommand{\pp}{{\it p.}}
\newcommand{\de}{\em}

\title{  {Improvements on some partial trace inequalities for positive semidefinite block matrices}\thanks{This paper was firstly announced in November 2021, 
and was later published on Linear and Multilinear Algebra, 2022. 
This is a final version;   
see \url{https://doi.org/10.1080/03081087.2022.2121368}.  E-mail addresses:
ytli0921@hnu.edu.cn (Y\v{o}ngt\={a}o L\v{i}).} }

\author{Yongtao Li$^{*}$\\
{\small School of Mathematics, Hunan University} \\
{\small Changsha, Hunan, 410082, P.R. China } 
 }

\maketitle

\vspace{-1cm}

\begin{center}
Dedicated to Prof. Weijun Liu  on his 60th birthday
 \end{center}

\begin{abstract}
We study matrix inequalities involving partial traces for positive semidefinite 
block matrices. 
First of all, we present a new method 
to prove a celebrated result of Choi [Linear Algebra Appl. 516 (2017)]. 
The method also allows us to prove a generalization of 
another result of Choi [Linear Multilinear Algebra 66 (2018)]. 
Furthermore, we shall give an improvement on 
a recent result of Li, Liu and Huang [Operators and Matrices 15 (2021)]. 
In addition, we include with some majorization inequalities  involving 
partial traces for two by two block matrices, and also provide inequalities related to 
the unitarily invariant norms as well as the singular values, 
which can be viewed as slight extensions of two results of Lin [Linear Algebra Appl. 459 (2014)] and [Electronic J. Linear Algebra 31 (2016)]. 
 \end{abstract}

{{\bf Key words:}  
 Partial transpose; 
 Partial traces; 
Cauchy and Khinchin; 
Positive semidefinite.  } 

{{\bf 2010 Mathematics Subject Classification.}  15A45, 15A60, 47B65.}

\section{Introduction}

\label{sec1}

The space of $m\times n$ complex matrices is denoted by $\mathbb{M}_{m\times n}$; 
if $m=n$, we write $\mathbb{M}_n$ for $\mathbb{M}_{n\times n}$ 
and if $n=1$, we use $\mathbb{C}^m$ for $\mathbb{M}_{m\times 1}$. 
By convention, if $A\in \mathbb{M}_n$ is positive semidefinite, 
we write $A\ge 0$. For  Hermitian matrices $A$ and $B$ with the same size, 
$A\ge B$ means that  $A-B$ is positive semidefinite, i.e., $A-B\ge 0$. 
 We denote by $\mathbb{M}_m(\mathbb{M}_n)$ the set of $m\times m$ block matrices 
with each block in $\mathbb{M}_n$. 
Each element of $\mathbb{M}_m(\mathbb{M}_n)$ is also viewed as an $mn\times mn$ matrix 
with numerical entries and  usually written as $A=[A_{i,j}]_{i,j=1}^m$,  
where $A_{ij}\in \mathbb{M}_n$.   
We denote by $A\otimes B$ the Kronecker product of $A$ with $B$, that is, 
if $A=[a_{i,j}]\in \mathbb{M}_m$ and $B\in \mathbb{M}_n$, 
then $A\otimes B \in \mathbb{M}_m(\mathbb{M}_n)$ whose $(i,j)$ block is $a_{i,j}B$.

In the present paper, we are mainly concentrated on the block positive semidefinite matrices. 
Given  $A=[A_{i,j}]_{i,j=1}^m\in \mathbb{M}_m(\mathbb{M}_n)$, 
 the  partial transpose of $A$ is defined as 
\[ A^{\tau} = [A_{j,i}]_{i,j=1}^m=\begin{bmatrix} 
A_{1,1} & \cdots & A_{m,1} \\ 
\vdots & \ddots & \vdots \\
A_{1,m} & \cdots & A_{m,m}
\end{bmatrix}. \]
This is different from the usual transpose, which is defined as 
 \[ A^{T} = [A_{j,i}^T]_{i,j=1}^m=\begin{bmatrix} 
A_{1,1}^T & \cdots & A_{m,1}^T \\ 
\vdots & \ddots & \vdots \\
A_{1,m}^T & \cdots & A_{m,m}^T
\end{bmatrix}. \]
The primary application of partial transpose is materialized 
in quantum information theory \cite{Horo96, Hiro03, Petz08}. 
It is easy to see that $A\ge 0$ does not necessarily imply $A^{\tau}\ge 0$. 
If both $A$ and $A^{\tau}$ are positive semidefinite, 
then $A$ is said to be positive partial transpose (or PPT for short). 
 For more explanations and applications of the PPT matrices, 
we recommend a comprehensive monograph \cite{Bha07}, 
and see, e.g.,  \cite{Lin14,Lee15,Zhang19,FLT2021CMB,FLT2021} for recent results. 
Next we introduce the definition of two partial traces $\tr_1 A$ and $\tr_2 A$ of $A$. 
 \[  \tr_1 A = \sum_{i=1}^m A_{i,i} ~~~\text{and}~~~\tr_2 A = [\tr A_{i,j}]_{i,j=1}^m, \]
where $\tr (\cdot )$ stands for the usual trace. 
Clearly, we have $\tr_1 A \in \mathbb{M}_n$ and $\tr_2 A\in \mathbb{M}_m$.

 It is believed that 
 there are many elegant matrix inequalities that 
 have arisen from  the probability theory and  quantum information theory 
 in the literature.  
As we all know, these two partial trace maps are linear and trace-preserving. 
Furthermore, if ${ A}=[A_{i,j}]_{i,j=1}^m \in \mathbb{M}_m(\mathbb{M}_n)$ is positive semidefinite,  
it is easy to see that  both $\mathrm{tr}_1 A$ and $\mathrm{tr}_2 A$ 
are also positive semidefinite; see, e.g., \cite[p. 237]{Zhang11} or \cite[Theorem 2.1]{Zha12}. 
Over the years, various results involving partial transpose and partial traces have been obtained 
in the literature, e.g., \cite{Ando14,Choi17,Choi18,FLTmia,HL20}. 
We  introduce the background and recent progress  briefly. 
In the following results, 
we always assume that  $A\in \mathbb{M}_m(\mathbb{M}_n)$ 
is a positive semidefinite block matrix.   
\begin{itemize}
\item[(R1)]  
Choi \cite{Choi17} presented  
by using induction on $m$ that $I_m\otimes \tr_1 A^{\tau} \ge A^{\tau}$.  

\item[(R2)]
Zhang \cite{Zhang19} revisited Choi's result and proved $(\tr_2 A^{\tau})\otimes I_n\ge A^{\tau}$. 

\item[(R3)] 
Choi \cite{Choi18} further extended the result of Zhang  and proved 
\begin{equation} \label{eqr5}
I_m\otimes \tr_1 A^{\tau} \ge \pm A^{\tau}, 
\end{equation}
and 
\begin{equation} \label{eqchoitr2}
 (\tr_2 A^{\tau})\otimes I_n\ge \pm A^{\tau} .
\end{equation}

\item[(R4)] 
Ando  \cite{Ando14} (or see \cite{Lin16} for an alternative proof) revealed a nice connection between the first partial trace 
and the second partial trace, and established  
\begin{equation} \label{eqando}
 (\tr A)I_{mn} - (\mathrm{tr}_2 A) \otimes I_n \ge  
I_m\otimes (\mathrm{tr}_1 A)   -  A.
\end{equation}

\item[(R5)] 
Motivated by  (\ref{eqr5}) and  (\ref{eqando}), 
Li, Liu and Huang \cite{HL20} (or see \cite{LP2020} for a unified treatment) proved recently an analogous complement, 
which states that 
\begin{equation} \label{eqhl}
 (\tr A)I_{mn}  - (\tr_2 A) \otimes I_n
\ge \pm  \bigl( I_m\otimes (\tr_1 A)  -A\bigr),
\end{equation}
and Li et al. also obtained  
\begin{equation} \label{eqq4}
 (\tr A)I_{mn}  \pm (\tr_2 A) \otimes I_n \ge A \pm I_m\otimes (\tr_1 A).
\end{equation}
\end{itemize}

We will present some partial trace inequalities 
for positive semidefinite block matrices which 
improve the abovementioned results. 
The paper is organized as follows. 
In Section \ref{sec2}, we shall give a new method to prove Choi's result (R1). 
This  method can allow us to give an improvement on 
 (\ref{eqr5}); see Theorem  \ref{thm22}. 
In Section \ref{sec3}, we present inequalities 
for the second partial trace 
and show an improvement on  (\ref{eqchoitr2}); 
see Theorem \ref{thm24}.  
In Section \ref{sec4}, we shall prove some 
inequalities involving both the first and second partial trace. 
Our results  can  be viewed as  improvements on  
(\ref{eqhl}) and (\ref{eqq4}); see Theorems \ref{thm42}, \ref{thm44} 
and \ref{thm4.4}. 
In Section \ref{sec5},  we give  an application on Cauchy--Khinchin's 
inequality by using 
Theorems \ref{thm42} and 
\ref{thm44}; see Corollary \ref{ineqthm52}. 
In  Section \ref{sec6}, we study the majorization inequalities 
related to partial traces for two by two block matrices; see Theorem  \ref{thm52}. In addition, 
we also prove some inequalities about the unitarily invariant norms
 as well as the singular values, which extend slightly 
 two recent results of Lin \cite{Lin14} and \cite{Lin16b}; 
 see Theorems \ref{thm54} and \ref{thm37}.

\section{Inequalities about the first partial trace}
\label{sec2}

We shall give a short  proof 
 of Choi's result (R1). 
The original proof stated in \cite[Theorem 2]{Choi17} 
 used a standard decomposition 
of positive semidefinite matrices and then applied inductive techniques. 
Our method is quite different and transparent.  
As an application of this method,  
we shall present an improvement on 
 (\ref{eqr5}) of Choi's result (R3).

\medskip 
\noindent 
{\bf Alternative proof of  (R1).}
Denote $D_A:=A_{1,1}\oplus A_{2,2} \oplus \cdots  \oplus A_{m,m}$. We are going to prove 
\[ I_m \otimes \tr_1 A^{\tau}  + (m-2)D_A
\ge A^{\tau} + (m-2)D_A , \]
which is the same as 
\begin{eqnarray}
&& \notag \begin{bmatrix}
(m\!-\!1)A_{1,1} +\sum\limits_{i\neq 1} A_{i,i} & 0 & \cdots & 0 \\
0 & (m\!-\!1)A_{2,2} +\sum\limits_{i\neq 2} A_{i,i} & \cdots & 0 \\
\vdots & \vdots &&\vdots \\
0 & 0 & \cdots &(m\!-\!1)A_{m,m} +\sum\limits_{i\neq m} A_{i,i}
\end{bmatrix} \\
&  \label{eqkey} \ge &
\begin{bmatrix}
(m-1)A_{1,1}  & A_{2,1} & \cdots & A_{m,1} \\
A_{1,2} & (m-1)A_{2,2}  & \cdots & A_{m,2} \\
\vdots & \vdots &&\vdots \\
A_{1,m} & A_{2,m} & \cdots &(m-1)A_{m,m}
\end{bmatrix}. 
\end{eqnarray}
It suffices to show that for every pair $(i,j)$ with $1\le i <j \le m$,  
\begin{equation} 
\label{eq6} \begin{bmatrix} 
       & \text{\small $i$-th} &        &  \text{\small $j$-th} &   \\[-0.3cm]
       & {\vdots} &        & {\vdots} &        \\
\cdots & A_{i,i}+A_{j,j} & \cdots & 0 & \cdots \\
        & \vdots &  & \vdots &       \\
\cdots & 0 & \cdots & A_{i,i}+A_{j,j} & \cdots \\
       & \vdots &        & \vdots &        
\end{bmatrix} 
\ge 
\begin{bmatrix} 
       & \vdots &        & \vdots &        \\
\cdots & A_{i,i} & \cdots & A_{j,i} & \cdots \\
        & \vdots &  & \vdots &       \\
\cdots & A_{i,j} & \cdots & A_{j,j} & \cdots \\
       & \vdots &        & \vdots &        
\end{bmatrix} .
\end{equation}
Indeed,  we can 
sum all inequalities (\ref{eq6}) over all $1\le i<j\le n$, which leads to the 
desired (\ref{eqkey}). 
Note that the omitted blocks in (\ref{eq6}) are zero matrices, so we need to prove 
\[ \begin{bmatrix}A_{i,i}+A_{j,j} & 0 \\ 
0 & A_{i,i}+A_{j,j}  \end{bmatrix} \ge 
 \begin{bmatrix}A_{i,i} & A_{j,i} \\ 
A_{i,j} & A_{j,j}  \end{bmatrix}.  \]
This inequality immediately holds by observing that 
\[  \begin{bmatrix}A_{j,j} & -A_{j,i} \\ 
-A_{i,j} & A_{i,i}  \end{bmatrix} = 
 \begin{bmatrix} 0 & -I_n \\ 
I_n & 0  \end{bmatrix} 
 \begin{bmatrix}A_{i,i} & A_{i,j} \\ 
A_{j,i} & A_{j,j}  \end{bmatrix} 
\begin{bmatrix} 0 & I_n \\ 
-I_n & 0  \end{bmatrix}.   \]
Hence, we complete the proof.

\medskip

As promised, we shall provide an improvement on (\ref{eqr5}) by using the above method.

\begin{theorem}  \label{thm22}
Let $A= [A_{i,j}]_{i,j=1}^m\in \mathbb{M}_m(\mathbb{M}_n)$ be positive semidefinite. Then 
\[ I_m \otimes \tr_1 A^{\tau} \ge -A^{\tau} +2D_A, \]
where $D_A=A_{1,1}\oplus A_{2,2} \oplus \cdots  \oplus A_{m,m}$. 
\end{theorem}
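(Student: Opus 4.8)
The plan is to imitate the argument just used for Theorem \ref{thm21}, replacing the elementary $2\times 2$ block inequality by a sign-flipped variant. As there, I would first add the correction term $(m-2)D_A$ to both sides and prove the equivalent statement
\[ I_m\otimes \tr_1 A^{\tau} + (m-2)D_A \ge -A^{\tau} + mD_A. \]
Noting that $\tr_1 A^{\tau}=\tr_1 A=\sum_i A_{i,i}$ and that the $(i,j)$-block of $-A^{\tau}$ is $-A_{j,i}$, the left-hand side has $k$-th diagonal block $(m-1)A_{k,k}+\sum_{i\neq k}A_{i,i}$, while the right-hand side has diagonal blocks $(m-1)A_{k,k}$ and off-diagonal $(i,j)$-blocks $-A_{j,i}$. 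I would then reduce this matrix inequality to a sum of pairwise contributions, exactly as inequality (\ref{eq6}) was summed over $1\le i<j\le m$.

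The single building block I need, for each pair $i<j$, is the sign-flipped inequality
\[ \begin{bmatrix}A_{i,i}+A_{j,j} & 0 \\ 0 & A_{i,i}+A_{j,j}\end{bmatrix} \ge \begin{bmatrix}A_{i,i} & -A_{j,i} \\ -A_{i,j} & A_{j,j}\end{bmatrix}, \]
embedded into the block rows and columns indexed by $i$ and $j$ with zero blocks elsewhere. Its validity is the crux, and I would establish it just as in Theorem \ref{thm21}: the difference of the two sides is
\[ \begin{bmatrix}A_{j,j} & A_{j,i} \\ A_{i,j} & A_{i,i}\end{bmatrix} = P\begin{bmatrix}A_{i,i} & A_{i,j} \\ A_{j,i} & A_{j,j}\end{bmatrix}P^{*}, \qquad P=\begin{bmatrix}0 & I_n \\ I_n & 0\end{bmatrix}, \]
which is a congruence of the principal submatrix $\begin{bmatrix}A_{i,i} & A_{i,j}\\ A_{j,i} & A_{j,j}\end{bmatrix}\ge 0$ of $A$, hence positive semidefinite. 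Compared with Theorem \ref{thm21}, the only change is that the conjugating matrix is now the symmetric swap $P$ rather than the rotation used there, and this is precisely what flips the sign on the off-diagonal blocks.

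Finally I would collect the pairwise inequalities. Summing over all $1\le i<j\le m$, each index $k$ occurs in exactly $m-1$ pairs, so the diagonal blocks accumulate to $(m-1)A_{k,k}+\sum_{i\neq k}A_{i,i}$ on the left and $(m-1)A_{k,k}$ on the right, whereas the $(i,j)$-block $-A_{j,i}$ on the right comes from the single pair $(i,j)$. This is exactly the displayed inequality $I_m\otimes \tr_1 A^{\tau}+(m-2)D_A\ge -A^{\tau}+mD_A$; cancelling $(m-2)D_A$ from both sides yields $I_m\otimes \tr_1 A^{\tau}\ge -A^{\tau}+2D_A$, as claimed. I do not expect a genuine obstacle, since the scheme is already validated by Theorem \ref{thm21}; the only points requiring care are choosing the conjugating matrix so that the off-diagonal signs match those of $-A^{\tau}$, and the bookkeeping of how often each diagonal block is counted when the $\binom{m}{2}$ pairwise inequalities are added, which is what pins down the coefficient $2$ of $D_A$ in the final bound.
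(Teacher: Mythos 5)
Your proposal is correct and follows essentially the same route as the paper's own proof: the paper likewise shifts both sides by a multiple of $D_A$ (it subtracts $mD_A$ where you add $(m-2)D_A$, an equivalent normalization), reduces the claim to a sum over pairs $1\le i<j\le m$ of elementary $2\times 2$ block inequalities, and verifies each one via the same congruence of the principal submatrix $\left[\begin{smallmatrix} A_{i,i} & A_{i,j} \\ A_{j,i} & A_{j,j} \end{smallmatrix}\right]$ by the symmetric swap $\left[\begin{smallmatrix} 0 & I_n \\ I_n & 0 \end{smallmatrix}\right]$. There are no gaps; your bookkeeping of the $m-1$ pairs through each index and the resulting coefficient $2$ of $D_A$ matches the paper exactly.
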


\begin{proof} 
We intend to prove 
\[ I_m \otimes \tr_1 A^{\tau} - mD_A \ge -A^{\tau} -(m-2)D_A. \]
This inequality can be written as 
\begin{eqnarray*}
& & \begin{bmatrix}
-(m\!-\!1)A_{1,1} +\sum\limits_{i\neq 1} A_{i,i} & 0 & \cdots & 0 \\
0 & -(m\!-\!1)A_{2,2} +\sum\limits_{i\neq 2} A_{i,i} & \cdots & 0 \\
\vdots & \vdots &&\vdots \\
0 & 0 & \cdots &-(m\!-\!1)A_{m,m} +\sum\limits_{i\neq m} A_{i,i}
\end{bmatrix} \\
& \ge  &
\begin{bmatrix}
-(m-1)A_{1,1}  & -A_{2,1} & \cdots & -A_{m,1} \\
-A_{1,2} & -(m-1)A_{2,2}  & \cdots & -A_{m,2} \\
\vdots & \vdots &&\vdots \\
-A_{1,m} & -A_{2,m} & \cdots &-(m-1)A_{m,m}
\end{bmatrix}.
\end{eqnarray*}
Using a similar treatment as to the above proof of (R1), it is sufficient to prove 
\[ \begin{bmatrix} -A_{i,i}+A_{j,j} & 0 \\ 
0 & -A_{j,j} + A_{i,i}  \end{bmatrix} \ge 
 \begin{bmatrix}-A_{i,i} & -A_{j,i} \\ 
-A_{i,j} & -A_{j,j}  \end{bmatrix}  \]
for every pair $(i,j)$ with $1\le i<j\le m$, 
which follows by noting that 
\[  \begin{bmatrix}A_{j,j} & A_{j,i} \\ 
A_{i,j} & A_{i,i}  \end{bmatrix} = 
 \begin{bmatrix} 0 & I_n \\ 
I_n & 0  \end{bmatrix} 
 \begin{bmatrix}A_{i,i} & A_{i,j} \\ 
A_{j,i} & A_{j,j}  \end{bmatrix} 
\begin{bmatrix} 0 & I_n \\ 
I_n & 0  \end{bmatrix}.   \]
Thus, this completes the proof. 
\end{proof}

Although Theorem \ref{thm22}  is an improvement of Choi's result (\ref{eqr5}), 
as pointed out by a referee, a concise proof of Theorem \ref{thm22} can also be given from Choi's proof in \cite{Choi18}.

\begin{corollary}
Let $A=[A_{i,j}]_{i,j=1}^m \in  \mathbb{M}_m(\mathbb{M}_n)$ be Hermitian. Then 
\[ (m-1)\lambda_{\max} (A)I_{mn}  
\ge  I_m\otimes \tr_1 A^{\tau}  - A^{\tau} \ge (m-1)\lambda_{\min} (A)I_{mn} ,\]
and 
\[ (m-1)\lambda_{\max} (A)I_{mn} +2D_A
\ge  I_m\otimes \tr_1 A^{\tau} + A^{\tau} \ge (m-1)\lambda_{\min} (A)I_{mn} +2D_A,  \]
where $D_A=A_{1,1}\oplus A_{2,2} \oplus \cdots  \oplus A_{m,m}$. 
\end{corollary}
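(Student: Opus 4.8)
The plan is to deduce both inequalities from Theorem \ref{thm22}, applied not to $A$ itself (which is merely Hermitian) but to two positive semidefinite shifts of it. First I would record how the three operations appearing in the statement interact with scalar multiples of the identity. Because the partial transpose fixes every diagonal block, $(X^{\tau})_{i,i}=X_{i,i}$, we always have $\tr_1 X^{\tau}=\tr_1 X$; in particular $I_m\otimes \tr_1 A^{\tau}=I_m\otimes \tr_1 A$, so the central quantity in the corollary is exactly the one governed by the theorem. In addition $(I_{mn})^{\tau}=I_{mn}$, and for any scalar $\lambda$ one has $\tr_1(\lambda I_{mn})=m\lambda I_n$ and hence $I_m\otimes \tr_1(\lambda I_{mn})=m\lambda I_{mn}$, while the block-diagonal truncation satisfies $D_{\lambda I_{mn}}=\lambda I_{mn}$. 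Together with the linearity of $X\mapsto X^{\tau}$, $X\mapsto \tr_1 X$ and $X\mapsto D_X$, these identities are the whole toolkit.

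For the upper bound I would set $B=\lambda_{\max}(A)I_{mn}-A$, which is positive semidefinite since $\lambda_{\max}(A)$ bounds every eigenvalue of $A$, and apply Theorem \ref{thm22} to $B$ to get $I_m\otimes \tr_1 B^{\tau}\ge -B^{\tau}+2D_B$. Substituting $I_m\otimes \tr_1 B^{\tau}=m\lambda_{\max}(A)I_{mn}-I_m\otimes \tr_1 A$, $B^{\tau}=\lambda_{\max}(A)I_{mn}-A^{\tau}$ and $D_B=\lambda_{\max}(A)I_{mn}-D_A$ and then collecting terms should reduce the inequality to
\[ (m-1)\lambda_{\max}(A)I_{mn}+2D_A\ge I_m\otimes \tr_1 A+A^{\tau}, \]
which is the left-hand inequality of the corollary.

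For the lower bound I would instead use $C=A-\lambda_{\min}(A)I_{mn}\ge 0$ and again apply Theorem \ref{thm22}, now with $C^{\tau}=A^{\tau}-\lambda_{\min}(A)I_{mn}$, $\tr_1 C=\tr_1 A-m\lambda_{\min}(A)I_n$ and $D_C=D_A-\lambda_{\min}(A)I_{mn}$. The same bookkeeping should turn $I_m\otimes \tr_1 C^{\tau}\ge -C^{\tau}+2D_C$ into
\[ I_m\otimes \tr_1 A+A^{\tau}\ge (m-1)\lambda_{\min}(A)I_{mn}+2D_A, \]
and concatenating the two displays finishes the argument.

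I do not expect a genuine obstacle here; the only subtle point is to keep track of the asymmetry between $\tr_1(\lambda I_{mn})=m\lambda I_n$ and $D_{\lambda I_{mn}}=\lambda I_{mn}$, since it is precisely the resulting mismatch that contributes the coefficient $m-1$ to the final bounds. As a sanity check I would note that feeding the very same two shifts into Theorem \ref{thm21} instead of Theorem \ref{thm22} would produce companion estimates for $I_m\otimes \tr_1 A-A^{\tau}$, confirming that the sign structure of the corollary is inherited directly from that of Theorem \ref{thm22}.
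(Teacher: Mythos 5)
Your proposal is correct and is exactly the paper's argument: the paper proves this corollary by applying Theorem \ref{thm22} to the shifts $A-\lambda_{\min}(A)I_{mn}$ and $\lambda_{\max}(A)I_{mn}-A$, leaving the bookkeeping to the reader, and your computation (including the key identities $\tr_1 A^{\tau}=\tr_1 A$, $\tr_1(\lambda I_{mn})=m\lambda I_n$, $D_{\lambda I_{mn}}=\lambda I_{mn}$) carries it out correctly and explains the source of the coefficient $m-1$.
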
 

\begin{proof}
The required result holds from  Choi's result (R1) and Theorem \ref{thm22} by replacing $A$ 
with $A-\lambda_{\min}(A)I_{mn}$ and $\lambda_{\max}(A)I_{mn}-A$. 
We leave the detailed proof to the readers.  
\end{proof}

\section{Inequalities about the second partial trace}
\label{sec3}

We  present some inequalities of the second partial trace in this section. 
First of all, we shall give an improvement on  (\ref{eqchoitr2}). 
To illustrate the relations between the first and second partial traces, 
we shall apply a useful technique, which was recently introduced by Choi \cite{Choi18}. 
Assume that $A=[A_{i,j}]_{i,j=1}^m\in \mathbb{M}_m(\mathbb{M}_n)$, where
$A_{i,j}= [a_{r,s}^{i,j} ]_{r,s=1}^n$. We define 
$B_{r,s}:=[a_{r,s}^{i,j}]_{i,j=1}^m\in \mathbb{M}_m$ and  
\begin{equation} \label{eqtilde} 
 \widetilde{A} := [B_{r,s}]_{r,s=1}^n  \in \mathbb{M}_n (\mathbb{M}_m). 
 \end{equation} 
 Clearly, both $A$ and $\widetilde{A}$ are matrices of order $mn\times mn$. 
Furthermore,  
$A$ and $\widetilde{A}$ are unitarily similar; see, e.g., \cite{Choi532} or \cite{Li20}. 
The similarity implies that $\widetilde{A}$ is  positive semidefinite whenever $A$ is positive semidefinite.  
 Next, we  make a brief review of some useful properties. 
 
 \begin{lemma} \label{lem-property} \cite{Choi18}
 Each of the following holds. \\
 (a) For $A,B\in \mathbb{M}_m(\mathbb{M}_n)$, 
 $A\le B$ implies $\widetilde{A} \le \widetilde{B}$. \\ 
 (b) For $X\in \mathbb{M}_m$ and $Y\in \mathbb{M}_n$, 
$ \widetilde{X\otimes Y} = Y\otimes X$. \\ 
 (c) $ \widetilde{A}^{\tau} = (\widetilde{A^{\tau}})^T= 
\widetilde{(A^{\tau})^T}$. \\ 
(d) For $A\in \mathbb{M}_m(\mathbb{M}_n)$, 
$ \mathrm{tr}_2 A  = \mathrm{tr}_1\widetilde{A}$ 
and $\mathrm{tr}_2\widetilde{A}^{\tau} = \mathrm{tr}_1A^T$. 
 \end{lemma}
 
 In the sequel, we write $A\circ B$ for the Hadamard (Schur) product of $A$ and $B$. 

\begin{theorem}  \label{thm24}
Let $A \in \mathbb{M}_m(\mathbb{M}_n)$ be positive semidefinite. Then  
\[ (\tr_2 A^{\tau}) \otimes I_n \ge - A^{\tau}+ 2A^{\tau} \circ J, \]
where $J$ is the $m\times m$ block matrix with each block $I_n$. 
\end{theorem}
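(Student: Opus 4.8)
The plan is to deduce this second--partial--trace inequality from the first--partial--trace inequality of Theorem \ref{thm22} by exploiting the correspondence $A \mapsto \widetilde{A}$ from (\ref{eqtilde}). The whole purpose of that construction is that it interchanges the two partial traces, as recorded in (\ref{tr1tr2}), and turns block--diagonal extraction into the within--block diagonal extraction encoded by $J$. Since $A$ and $\widetilde{A}$ are unitarily similar (as recalled just before the statement), we have $\widetilde{A}\ge 0$, so Theorem \ref{thm22} applies to $\widetilde{A}\in \mathbb{M}_n(\mathbb{M}_m)$ and gives
\[ I_n \otimes \tr_1 \widetilde{A}^{\tau} \ge -\widetilde{A}^{\tau} + 2 D_{\widetilde{A}}, \]
where $D_{\widetilde{A}}$ is the block diagonal of $\widetilde{A}$. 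This is the inequality I would start from.

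Next I would apply the tilde operation to both sides. Because $M\mapsto \widetilde{M}$ is a unitary similarity, it preserves the Loewner order, so it suffices to simplify each term. Using $\widetilde{X\otimes Y}=Y\otimes X$ from (\ref{kron}) together with $\tr_1 \widetilde{B}=\tr_2 B$ from (\ref{tr1tr2}) and $\widetilde{A}^{\tau}=\widetilde{(A^{\tau})^T}$ from (\ref{eqzhuan}), the first term $\widetilde{I_n\otimes \tr_1\widetilde{A}^\tau}$ collapses (via $\tr_1\widetilde{(A^\tau)^T}=\tr_2((A^\tau)^T)=\tr_2 A$) to $(\tr_2 A)\otimes I_n$, while the second becomes $-(A^{\tau})^T$, where one uses that the tilde is an involution. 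For the third term I would observe that the tilde commutes with the Hadamard product, since it is merely a consistent reindexing of entries; hence $\widetilde{D_{\widetilde{A}}}$ has $(i,j)$ block equal to the within--block diagonal $A_{i,j}\circ I_n$, which is precisely $A\circ J$. Altogether this produces
\[ (\tr_2 A)\otimes I_n \ge -(A^{\tau})^T + 2\,A\circ J. \]

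Finally I would take the ordinary transpose of both sides, which again preserves positive semidefiniteness. On the left $(\tr_2 A)^T=\tr_2 A^{\tau}$, so the left side becomes $(\tr_2 A^{\tau})\otimes I_n$; on the right $-(A^{\tau})^T$ transposes to $-A^{\tau}$ and $A\circ J$ transposes to $A^T\circ J$. The only genuinely delicate point is this transpose bookkeeping, and it is resolved by the identity $A^T\circ J = A^{\tau}\circ J$: Hadamard multiplication by $J$ retains only the within--block diagonal entries, and these are unchanged under transposition, so both matrices have $(i,j)$ block $A_{j,i}\circ I_n$. The transposed inequality is then exactly $(\tr_2 A^{\tau})\otimes I_n \ge -A^{\tau}+2\,A^{\tau}\circ J$, as desired. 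Thus the main obstacle is not any hard estimate but keeping the several transposes straight across the tilde correspondence; once the $J$--Hadamard term is recognized as transpose--invariant, everything collapses to the claimed statement.
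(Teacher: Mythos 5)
Your proposal is correct and follows essentially the same route as the paper's own proof: apply Theorem \ref{thm22} to $\widetilde{A}$, translate back through the identities (\ref{eqzhuan}), (\ref{tr1tr2}) and (\ref{kron}), identify $\widetilde{D_{\widetilde{A}}}$ with $A\circ J=[A_{i,j}\circ I_n]_{i,j=1}^m$, and finish by taking a transpose using $(A\circ J)^T=A^T\circ J=A^{\tau}\circ J$. The only cosmetic difference is that you simplify $\tr_2\bigl((A^{\tau})^T\bigr)$ to $\tr_2 A$ before the final transpose, while the paper carries $\tr_2 (A^{\tau})^T$ along; the bookkeeping is equivalent.
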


\begin{proof}
Replacing $A$ with $\widetilde{A}$ in Theorem \ref{thm22}, we have  
\[  I_n \otimes \mathrm{tr}_1 \widetilde{A}^{\tau} 
\ge -\widetilde{A}^{\tau} +2D_{\widetilde{A}}.\]  
Applying Lemma \ref{lem-property}, we  get 
\begin{equation*} 
I_n \otimes \mathrm{tr}_1 \widetilde{(A^{\tau})^T}  
\ge 
-\widetilde{(A^{\tau})^T} + 2 {D_{\widetilde{A}}}. 
\end{equation*}
Observe that $X\ge Y$ implies $\widetilde{X}\ge \widetilde{Y}$. 
 Invoking Lemma \ref{lem-property} again, we obtain 
\begin{equation}  \label{eqtrans}
\left( \tr_2 (A^{\tau})^T\right) \otimes I_n = 
\tr_1 \widetilde{(A^{\tau})^T}  \otimes I_n 
\ge 
-(A^{\tau})^T + 2\widetilde{D_{\widetilde{A}}}. 
\end{equation}
Recall in (\ref{eqtilde}) that 
$B_{r,s}=[a_{r,s}^{i,j}]_{i,j=1}^m\in \mathbb{M}_m$. 
A direct computation reveals that 
\[  \widetilde{D_{\widetilde{A}}} = 
\widetilde{ \mathrm{diag}(B_{1,1}},B_{2,2},\ldots ,B_{n,n}) = [A_{i,j}\circ I_n]_{i,j=1}^m. \]
Note that $(X\otimes Y)^T = X^T\otimes Y^T$ and 
\[ (\widetilde{D_{\widetilde{A}}})^T=\left[ (A_{j,i}\circ I_n)^T\right]_{i,j=1}^m 
= [A_{j,i}\circ I_n]_{i,j=1}^m= A^{\tau} \circ J. \]
Taking transpose in both sides of (\ref{eqtrans}) yields the required result. 
\end{proof}

We replace $A$ 
with $A-\lambda_{\min}(A)I_{mn}$ and $\lambda_{\max}(A)I_{mn}-A$ in  
Theorem \ref{thm24}, which yields the following corollary. 

\begin{corollary}
Let $A\in  \mathbb{M}_m(\mathbb{M}_n)$ be Hermitian. Then 
\[ (n-1)\lambda_{\max} (A)I_{mn} + 2 A^{\tau} \circ J
\ge (\tr_2 A^{\tau})\otimes I_n + A^{\tau} \ge (n-1)\lambda_{\min} (A)I_{mn} + 
2A^{\tau}\circ J,  \]
where $J$ is the $m\times m$ block matrix with each block $I_n$. 
\end{corollary}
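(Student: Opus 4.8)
The plan is to derive this Corollary directly from Theorem~\ref{thm24} by the standard substitution trick already foreshadowed in the text. Theorem~\ref{thm24} asserts that for any positive semidefinite $A$ one has $(\tr_2 A^{\tau})\otimes I_n \ge -A^{\tau} + 2A^{\tau}\circ J$, equivalently
\[ (\tr_2 A^{\tau})\otimes I_n + A^{\tau} - 2A^{\tau}\circ J \ge 0. \]
The key observation is that the left-hand expression is linear in $A$ through each of the maps $A\mapsto \tr_2 A^{\tau}$, $A\mapsto A^{\tau}$, and $A\mapsto A^{\tau}\circ J$ (the partial transpose, the partial trace, and the Hadamard product with a fixed matrix are all linear operations). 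So the first step is to record this linearity explicitly, which lets me apply Theorem~\ref{thm24} to any positive semidefinite matrix built affinely from $A$ and then expand the result back in terms of the original quantities.

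First I would apply Theorem~\ref{thm24} to the positive semidefinite matrix $\lambda_{\max}(A)I_{mn} - A$. Since $A$ is Hermitian, $\lambda_{\max}(A)I_{mn} - A \ge 0$, so the theorem applies. I then compute each term: because $(\lambda_{\max}(A)I_{mn})^{\tau} = \lambda_{\max}(A)I_{mn}$, the partial transpose of $\lambda_{\max}(A)I_{mn} - A$ is $\lambda_{\max}(A)I_{mn} - A^{\tau}$; its second partial trace satisfies $\tr_2(\lambda_{\max}(A)I_{mn}) = n\lambda_{\max}(A)I_m$, so $\bigl(\tr_2(\lambda_{\max}(A)I_{mn}-A)^{\tau}\bigr)\otimes I_n = n\lambda_{\max}(A)I_{mn} - (\tr_2 A^{\tau})\otimes I_n$; and $(\lambda_{\max}(A)I_{mn})^{\tau}\circ J = \lambda_{\max}(A)I_{mn}$ since $I_{mn}\circ J$ leaves the diagonal blocks fixed. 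Substituting these into Theorem~\ref{thm24} and collecting terms produces exactly the upper bound
\[ (n-1)\lambda_{\max}(A)I_{mn} + 2A^{\tau}\circ J \ge (\tr_2 A^{\tau})\otimes I_n + A^{\tau}. \]

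Symmetrically, I would apply Theorem~\ref{thm24} to $A - \lambda_{\min}(A)I_{mn} \ge 0$ and run the same bookkeeping to obtain the lower bound $(\tr_2 A^{\tau})\otimes I_n + A^{\tau} \ge (n-1)\lambda_{\min}(A)I_{mn} + 2A^{\tau}\circ J$. Combining the two inequalities yields the stated chain.

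The only genuine care-point, and the step I expect to require the most attention, is verifying that the scalar-matrix pieces transform as claimed under the three maps; in particular that $(I_{mn})^{\tau} = I_{mn}$, that $\tr_2(I_{mn}) = nI_m$ so the Kronecker factor contributes the coefficient $n$, and that $I_{mn}\circ J = I_{mn}$, so the constant term simplifies to the coefficient $n-1$ after the $-1$ and $+2$ contributions cancel against the $+n$. These are routine but must be tracked exactly to land the constants $(n-1)$ rather than $(n-2)$ or $(n+1)$; once they are pinned down the rest is pure linearity and sign-tracking, so I would present it compactly and, as the authors do in the analogous earlier corollary, leave the full term-by-term expansion to the reader.
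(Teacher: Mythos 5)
Your proposal is correct and coincides with the paper's own proof, which likewise obtains the corollary by substituting $\lambda_{\max}(A)I_{mn}-A$ and $A-\lambda_{\min}(A)I_{mn}$ into Theorem \ref{thm24}; your careful verification that $(I_{mn})^{\tau}=I_{mn}$, $\tr_2 I_{mn}=nI_m$, and $I_{mn}\circ J=I_{mn}$ is exactly the bookkeeping the paper leaves implicit, and it correctly produces the coefficient $n-1$. No gaps.
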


\section{Inequalities about two partial traces}
\label{sec4}

In this section, we shall present inequalities involving both 
the first and second partial traces, which are improvements 
on Li--Liu--Huang's result (R5).  
Recall that a map $\Phi : \mathbb{M}_n \to \mathbb{M}_k$ is called positive 
if it maps positive semidefinte matrices to positive semidefinite matrices. 
A map $\Phi :\mathbb{M}_n \to \mathbb{M}_k$ is said to be $m$-positive 
if for $[A_{i,j}]_{i,j=1}^m\in \mathbb{M}_m(\mathbb{M}_n)$, 
\[  [A_{i,j}]_{i,j=1}^m \ge 0 \quad \Rightarrow \quad [\Phi (A_{i,j})]_{i,j=1}^m \ge 0.   \]
We say that $\Phi$ is {\it completely positive} if it is $m$-positive for every integer $m\ge 1$. 
On the other hand, a map $\Phi : \mathbb{M}_n \to \mathbb{M}_k$ is called $m$-copositive if
 \[  [A_{i,j}]_{i,j=1}^m \ge 0 \quad \Rightarrow \quad [\Phi (A_{j,i})]_{i,j=1}^m \ge 0.   \]
Similarly, the map $\Phi$ is {\it completely copositive} if it is $m$-copositive for all $m\ge 1$. 
Furthermore, 
$\Phi$ is called completely PPT if the block matrix $[\Phi (A_{i,j})]_{i,j=1}^m$ is PPT for all $m\ge 1$  
whenever $[A_{ij}]_{i,j=1}^m\ge 0$, i.e., $\Phi$ is both  completely positive and completely copositive. 
It is well known that both the determinant map and the trace map are completely PPT; 
see \cite[p. 221, 237]{Zhang11} and   
see \cite[Chapter 3]{Bha07} for more standard results of completely positive maps. 

In 2014, Lin \cite{Lin14} proved that $\Phi (X)= (\tr X)I+X$ is a completely PPT map. 
Two years later,  Lin \cite{Lin16}  obtained that $\Psi (X)=(\tr X)I-X$ is a completely copositive map; see \cite{Li20laa} for related applications. 
It is easy to see that the completely copositivity of 
these two maps can also be deduced from  
Choi's result (\ref{eqchoitr2}).  
In this section,  we shall obtain some new matrix inequalities involving 
both the first and the second partial trace. 
Our results are  improvements on  (\ref{eqhl}) and (\ref{eqq4}).

It was proved in \cite[Example 1]{Bes13}   that if 
$\begin{bmatrix}A & B \\ B^* & C  \end{bmatrix}$ is positive semidefinite, then 
\begin{equation} \label{eqtr1} 
 \mathrm{tr} A\mathrm{tr} C -|\mathrm{tr} B|^2 \ge  \mathrm{tr}(AC) - \mathrm{tr}(B^*B) .  
\end{equation} 
We remark that (\ref{eqtr1}) was also  formulated in \cite{Zhang13,Lin14}. 
Note that  the positivity of the block matrix entails $\tr A \tr C - |\tr B|^2 \ge 0$; see \cite[p. 237]{Zhang11} 
or \cite[Theorem IX.5.10]{Bha97}. 
However,  the right hand side of 
(\ref{eqtr1}) might be negative.  
It has been shown that
the PPT condition on the block matrix can ensure $\tr AC \ge \tr B^*B$; see \cite[Theorem 2.1]{Lin15om}. 
Motivated by this observation, Kittaneh and Lin \cite{Lin17} 
further generalized (\ref{eqtr1}) 
by an elegant self-improved technique:  
\begin{equation}   \label{eqtr2}
 \mathrm{tr} A\mathrm{tr} C -|\mathrm{tr} B|^2 \ge \mathrm{tr}(B^*B) - \mathrm{tr}(AC).  
\end{equation}
By applying the $2$-copositivity of $\Psi (X)= (\tr X) I -X$, they  proved also that   
\begin{align}  \label{eqtr3}
 \mathrm{tr} A\mathrm{tr} C +|\mathrm{tr} B|^2 \ge  \mathrm{tr}(AC) +\mathrm{tr}(B^*B) . 
\end{align}
It is worth noting that the $2$-copositivity of $\Psi$ 
could also lead to the inequality (\ref{eqtr2}); 
see \cite{Li20laa} for more details.  
Let $A\in \mathbb{M}_m(\mathbb{M}_n) $ be positive semidefinite.  
By  using the $2$-copositivity of $\Psi (X)=(\tr X)I-X$, 
one can obtain Ando's result (\ref{eqando}). 
By employing the $2$-copositivity of $\Phi (X)=(\tr X)I+X$, 
Li, Liu and Huang \cite{HL20} established  (\ref{eqhl}) and (\ref{eqq4}).  
Last but not least, one may observe an interesting phenomenon 
that  (\ref{eqhl}) is similar to  
(\ref{eqtr1}) and (\ref{eqtr2}) in mathematical writing form. 
Correspondingly, (\ref{eqq4}) is also similar to (\ref{eqtr2}) and (\ref{eqtr3}).

In what follows, we will give an improvement on (\ref{eqq4}).

\begin{theorem} \label{thm42}
Let $A =[A_{i,j}]_{i,j=1}^m \in \mathbb{M}_m(\mathbb{M}_n)$ be positive semidefinite. Then  
\[  (\tr A)I_{mn}  + (\tr_2 A) \otimes I_n
\ge A+  I_m\otimes (\tr_1 A)  +  2(\tr_2 D_A)\otimes I_n - 2D_A,  \]
where $D_A=A_{1,1}\oplus A_{2,2} \oplus \cdots  \oplus A_{m,m}$.  
\end{theorem}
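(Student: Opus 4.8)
The plan is to prove the equivalent inequality obtained by moving the diagonal-correction terms to the left-hand side, namely
\[
 (\tr A)I_{mn} + (\tr_2 A)\otimes I_n - 2(\tr_2 D_A)\otimes I_n + 2D_A
 \ge A + I_m\otimes(\tr_1 A),
\]
and to recognize both sides as sums of pieces controlled by the results already established in Sections~\ref{sec2}, \ref{sec3}, and the first part of Section~\ref{sec4}. The guiding idea is that Theorem~\ref{coro211} tells us $\Phi(X)=(\tr X)I+X$ is $2$-copositive, which is exactly the mechanism Li--Liu--Huang used to derive (\ref{eqq4}); so I expect (\ref{eqq4}) itself to supply the ``base'' inequality
\[
 (\tr A)I_{mn} + (\tr_2 A)\otimes I_n \ge A + I_m\otimes(\tr_1 A),
\]
and the role of Theorem~\ref{thm22} will be to produce the sharper, $D_A$-dependent correction that upgrades it.

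First I would isolate the correction terms. Observe that $2(\tr_2 D_A)\otimes I_n - 2D_A$ is precisely the expression one gets by applying the difference between the coarse bound and the refined bound of Theorem~\ref{thm22}. Concretely, Theorem~\ref{thm22} gives $I_m\otimes \tr_1 A^{\tau}\ge -A^{\tau}+2D_A$, and the analogous second-partial-trace statement in Theorem~\ref{thm24} gives $(\tr_2 A^{\tau})\otimes I_n \ge -A^{\tau}+2A^{\tau}\circ J$; the term $2(\tr_2 D_A)\otimes I_n$ and the term $2D_A$ are the ``trace'' and ``transpose'' shadows of these $2D_A$-type corrections. So the key step is to feed the positive semidefinite matrix $A$ into the $2$-copositivity machinery of $\Phi$ (via Theorem~\ref{coro211}) while simultaneously tracking the diagonal blocks separately, rather than absorbing them. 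I would write $A$ in terms of its block diagonal $D_A$ and its off-diagonal part, apply the copositive map to each $2\times 2$ principal block as in the proof of (\ref{eqq4}), and sum the resulting inequalities over all pairs $(i,j)$ with $1\le i<j\le m$ exactly as in the pair-summation device used in Theorem~\ref{thm21}.

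The technical heart, and the step I expect to be the main obstacle, is the bookkeeping that shows the pairwise inequalities sum to give precisely the coefficients $2(\tr_2 D_A)\otimes I_n$ and $2D_A$ rather than some other multiple. Each pair contributes a $2\times 2$ block inequality of the form
\[
 \begin{bmatrix}(\tr A_{i,i}+\tr A_{j,j})I_n & 0\\ 0 & (\tr A_{i,i}+\tr A_{j,j})I_n\end{bmatrix}
 + \begin{bmatrix}A_{i,i}+A_{j,j} & 0\\ 0 & A_{i,i}+A_{j,j}\end{bmatrix}
 \ge
 \begin{bmatrix}A_{i,i} & A_{i,j}\\ A_{j,i} & A_{j,j}\end{bmatrix}
 + \begin{bmatrix}(\tr A_{i,i})I_n & (\tr A_{i,j})I_n\\ (\tr A_{j,i})I_n & (\tr A_{j,j})I_n\end{bmatrix},
\]
which is the $2$-copositivity of $\Phi$ applied to the compression onto coordinates $i,j$; this holds by Theorem~\ref{coro211}. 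When I sum over all $\binom{m}{2}$ pairs, each diagonal block $A_{i,i}$ is counted $m-1$ times on the left through the $\sum_{k\neq i}$ pattern, so I must carefully subtract off the over-counting. The correction $2(\tr_2 D_A)\otimes I_n - 2D_A$ is exactly what compensates for comparing the ``full'' $\tr_2 A$ and $I_m\otimes\tr_1 A$ against the diagonal-only contributions, and verifying this counting identity at the level of individual scalar entries $a^{i,j}_{r,s}$ is the one genuinely fiddly computation; everything else follows by linearity of $\tr_1,\tr_2$ and positivity of the summands. Once the counting is pinned down, the desired inequality is immediate and the proof concludes.
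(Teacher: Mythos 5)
There is a genuine gap, and it is not in the ``fiddly counting'' you deferred but in the choice of map. Your pairwise building block is a true inequality: subtracting the right side from the left and conjugating by $\mathrm{diag}(I_n,-I_n)$, it is equivalent to the $2$-copositivity of $\Phi(X)=(\tr X)I+X$ on the compression, as you say. But track what it sums to. The difference of the two sides, embedded at coordinates $(i,j)$, has diagonal blocks $(\tr A_{j,j})I_n+A_{j,j}$ (at position $i$) and $(\tr A_{i,i})I_n+A_{i,i}$ (at position $j$), and off-diagonal blocks $-(\tr A_{i,j})I_n-A_{i,j}$. Summing over all $1\le i<j\le m$, position $i$ accumulates $\sum_{j\neq i}\bigl((\tr A_{j,j})I_n+A_{j,j}\bigr)$, and the total assertion is
\[
(\tr A)I_{mn}+I_m\otimes(\tr_1 A)\ \ge\ A+(\tr_2 A)\otimes I_n,
\]
which is Ando's inequality (\ref{eqando}), not Theorem \ref{thm42}: the partial traces land on the \emph{opposite} sides from the statement you are trying to prove, and every $D_A$-type term cancels identically --- the $(m-2)$-fold over-count of $D_A$ and $(\tr_2 D_A)\otimes I_n$ appears with the same coefficient on both sides of the summed inequality. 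Since the cancellation is an identity rather than a matter of tuning coefficients, no bookkeeping at the level of the entries $a^{i,j}_{r,s}$ can make this scheme output the correction $2(\tr_2 D_A)\otimes I_n-2D_A$. Likewise, your framing of (\ref{eqq4}) as the ``base'' plus a Theorem~\ref{thm22}-correction is only a plan; the one concrete mechanism you commit to does not realize it.

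The missing idea is to run the copositivity through $\Psi(X)=(\tr X)I-X$ rather than $\Phi$. The paper's proof rewrites the target as a single block inequality and observes that Theorem \ref{thm22} says precisely that the block matrix $B=I_m\otimes(\tr_1 A)+A^{\tau}-2D_A$, whose $(i,i)$ block is $\sum_{k\neq i}A_{k,k}$ and whose $(i,j)$ block is $A_{j,i}$ for $i\neq j$, is positive semidefinite; applying the completely copositive map $\Psi$ of Theorem \ref{coro211} to $B$ turns the off-diagonal blocks into $(\tr A_{i,j})I_n-A_{i,j}$ and the diagonal blocks into $\sum_{k\neq i}\bigl((\tr A_{k,k})I_n-A_{k,k}\bigr)$, which is exactly the claimed inequality. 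If you prefer your pairwise summation device, it can be salvaged with the same sign change: apply the $2$-copositivity of $\Psi$ to each swapped compression $\left[\begin{smallmatrix}A_{j,j}&A_{j,i}\\A_{i,j}&A_{i,i}\end{smallmatrix}\right]\ge 0$ to get $\left[\begin{smallmatrix}\Psi(A_{j,j})&\Psi(A_{i,j})\\\Psi(A_{j,i})&\Psi(A_{i,i})\end{smallmatrix}\right]\ge 0$, and these pieces sum, with no cancellation, exactly to
\[
(\tr A)I_{mn}+(\tr_2 A)\otimes I_n-A-I_m\otimes(\tr_1 A)-2(\tr_2 D_A)\otimes I_n+2D_A\ \ge\ 0,
\]
so your summation instinct was sound, but the map, and hence the signs of the off-diagonal blocks, must be $\Psi$'s.
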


\begin{proof}
The desired inequality can be written as 
\begin{equation*}
\begin{bmatrix} 
\sum\limits_{i\neq 1} (\tr A_{i,i})I_n  & (\tr A_{1,2})I_n & \cdots & (\tr A_{1,m})I_n \\ 
(\tr A_{2,1}) I_n & \sum\limits_{i\neq 2} (\tr A_{i,i})I_n & \cdots & (\tr A_{2,m})I_n \\
\vdots & \vdots &  & \vdots \\
(\tr A_{m,1})I_n & (\tr A_{m,2}) I_n & \cdots & \sum\limits_{i\neq m}(\tr A_{i,i})I_n 
\end{bmatrix}  \ge  \begin{bmatrix} 
\sum\limits_{i\neq 1} A_{i,i} & A_{1,2} & \cdots & A_{1,m}  \\
A_{2,1} & \sum\limits_{i\neq 2} A_{i,i} & \cdots & A_{2,m}  \\
\vdots & \vdots & & \vdots \\
A_{m,1} & A_{m,2} & \cdots & \sum\limits_{i\neq m}A_{i,i}
\end{bmatrix}. 
\end{equation*}
It is easy to see from Theorem \ref{thm22} that 
\[  \begin{bmatrix} 
\sum\limits_{i\neq 1} A_{i,i} & A_{2,1} & \cdots & A_{m,1}  \\
A_{1,2} & \sum\limits_{i\neq 2} A_{i,i} & \cdots & A_{m,2}  \\
\vdots & \vdots & & \vdots \\
A_{1,m} & A_{2,m} & \cdots & \sum\limits_{i\neq m}A_{i,i}
\end{bmatrix} \ge 0. \]
Since $\Psi (X)=(\tr X)I-X$ is completely copositive. 
Then applying $\Psi$ to the above positive semidefinite block matrix yields the required inequality. 
\end{proof}

Note that $(\tr A_{i,i})I_n \ge A_{i,i}$ for each integer $i$, 
then $(\tr_2 D_A) \otimes I_n \ge D_A$, i.e., 
\[ 2(\tr_2 D_A)\otimes I_n - 2D_A \ge 0.  \]
So Theorem \ref{thm42} is indeed a generalization of (\ref{eqq4}). 
In view of symmetry of definitions of $\tr_1$ and $\tr_2$, 
one can easily obtain the following equivalent theorem.

\begin{theorem} \label{thm44}
Let $A\in \mathbb{M}_m(\mathbb{M}_n)$ be positive semidefinite. Then
\begin{equation*} \label{eq7}
 (\tr A)I_{mn}  - (\tr_2 A) \otimes I_n
\ge A- I_m\otimes (\tr_1 A) +  2(I_m\otimes \tr_1 A- A)\circ J, 
\end{equation*}
where $J$ is the $m\times m$ block matrix with each block $I_n$.
\end{theorem}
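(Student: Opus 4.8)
The plan is to read off this statement from Theorem~\ref{thm42} through the tilde symmetry of Section~\ref{sec3}, in exactly the way Theorem~\ref{thm24} was deduced from Theorem~\ref{thm22}. The operation $A\mapsto\widetilde A$ is an order isomorphism (a unitary conjugation followed by a reindexing) that interchanges the two partial traces and the two Kronecker factors: by (\ref{tr1tr2}) it swaps $\tr_1\leftrightarrow\tr_2$, by (\ref{kron}) it swaps $P\otimes I_n\leftrightarrow I_m\otimes P$, and it converts the block-diagonal correction $D$ into a Hadamard correction with $J$. Reading Theorem~\ref{thm42} through this dictionary should produce precisely the asserted inequality.

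Concretely, first I would apply Theorem~\ref{thm42} to $\widetilde A\in\mathbb{M}_n(\mathbb{M}_m)$, which is legitimate since $\widetilde A$ is unitarily similar to $A$ and hence positive semidefinite. With the roles of $m$ and $n$ exchanged this reads
\[ (\tr\widetilde A)I_{mn}+(\tr_2\widetilde A)\otimes I_m\ge\widetilde A+I_n\otimes\tr_1\widetilde A+2(\tr_2 D_{\widetilde A})\otimes I_m-2D_{\widetilde A}. \]
Using $\tr\widetilde A=\tr A$ together with $\tr_2\widetilde A=\tr_1 A$ and $\tr_1\widetilde A=\tr_2 A$ (which follow from (\ref{tr1tr2}) and $\widetilde{\widetilde A}=A$), I would then apply the tilde map to both sides, invoking the fact that $X\ge Y$ implies $\widetilde X\ge\widetilde Y$. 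The leading terms transform immediately: $I_{mn}$ is fixed, $\widetilde{\widetilde A}=A$, and by (\ref{kron}) one has $\widetilde{(\tr_1 A)\otimes I_m}=I_m\otimes\tr_1 A$ and $\widetilde{I_n\otimes\tr_2 A}=(\tr_2 A)\otimes I_n$.

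The step I expect to be the main obstacle is the bookkeeping of the two diagonal correction terms, which must fuse into the single Hadamard term $2(I_m\otimes\tr_1 A-A)\circ J$. For the second correction I would reuse the computation already carried out in the proof of Theorem~\ref{thm24}, namely $\widetilde{D_{\widetilde A}}=[A_{i,j}\circ I_n]_{i,j=1}^m=A\circ J$. For the first one I would check directly from (\ref{eqtilde}) that $\tr_2 D_{\widetilde A}$ is the diagonal part $(\tr_1 A)\circ I_n$ of $\tr_1 A$ --- its $r$-th diagonal entry is $\sum_i a_{r,r}^{i,i}$, which is exactly the $(r,r)$ entry of $\tr_1 A$ --- so that after the tilde $\widetilde{(\tr_2 D_{\widetilde A})\otimes I_m}=I_m\otimes\bigl((\tr_1 A)\circ I_n\bigr)=(I_m\otimes\tr_1 A)\circ J$. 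Collecting the transformed terms gives
\[ (\tr A)I_{mn}+I_m\otimes\tr_1 A\ge A+(\tr_2 A)\otimes I_n+2(I_m\otimes\tr_1 A)\circ J-2A\circ J, \]
and rearranging, together with $(I_m\otimes\tr_1 A)\circ J-A\circ J=(I_m\otimes\tr_1 A-A)\circ J$, yields the claim. The only genuine care needed is in tracking which Kronecker factor lands on which side and in confirming that the two corrections assemble cleanly into $(I_m\otimes\tr_1 A-A)\circ J$ without leaving a stray diagonal piece.
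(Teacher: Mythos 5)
Your proposal is correct and is essentially the paper's own argument: the paper derives Theorem~\ref{thm44} from Theorem~\ref{thm42} precisely ``in view of symmetry of definitions of $\tr_1$ and $\tr_2$,'' i.e.\ via the tilde map of Section~\ref{sec3}, which you simply carry out in full detail. All of your bookkeeping checks out --- in particular $\tr_2 D_{\widetilde A}=(\tr_1 A)\circ I_n$, $\widetilde{(\tr_2 D_{\widetilde A})\otimes I_m}=(I_m\otimes\tr_1 A)\circ J$, and $\widetilde{D_{\widetilde A}}=A\circ J$ (here, unlike in Theorem~\ref{thm24}, no final transpose is needed since no partial transpose appears) --- so your writeup is a faithful, correctly verified expansion of the one-line symmetry deduction the paper leaves implicit.
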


The next result is an analogue of Theorem \ref{thm42}.

\begin{theorem} \label{thm4.4}
Let $A=[A_{i,j}]_{i,j=1}^m \in \mathbb{M}_m(\mathbb{M}_n)$ be positive semidefinite. Then  
\[  (\tr A)I_{mn}  + (\tr_2 A) \otimes I_n + I_m\otimes (\tr_1 A)  +A
\ge  2(\tr_2 D_A)\otimes I_n + 2D_A,  \]
where $D_A=A_{1,1}\oplus A_{2,2} \oplus \cdots  \oplus A_{m,m}$.  
\end{theorem}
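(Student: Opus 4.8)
The plan is to mimic the proof of Theorem \ref{thm42}, but to replace the completely copositive map $\Psi(X)=(\tr X)I-X$ by $\Phi(X)=(\tr X)I+X$. First I would transfer everything to one side and rewrite the claimed inequality as a single block-positivity statement, computing the difference of the two sides block by block. On the diagonal, the scalar contributions coming from $(\tr A)I_{mn}$, $(\tr_2 A)\otimes I_n$ and $2(\tr_2 D_A)\otimes I_n$ collapse as $\tr A+\tr A_{i,i}-2\tr A_{i,i}=\sum_{k\neq i}\tr A_{k,k}$, while the matrix contributions coming from $I_m\otimes(\tr_1 A)$, $A$ and $2D_A$ collapse as $\tr_1 A+A_{i,i}-2A_{i,i}=\sum_{k\neq i}A_{k,k}$; hence the $(i,i)$ block of the difference is $\bigl(\sum_{k\neq i}\tr A_{k,k}\bigr)I_n+\sum_{k\neq i}A_{k,k}$. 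For $i\neq j$ only $(\tr_2 A)\otimes I_n$ and $A$ contribute, so the $(i,j)$ block reduces to $(\tr A_{i,j})I_n+A_{i,j}$. Thus the theorem is equivalent to the positive semidefiniteness of the block matrix with these diagonal and off-diagonal blocks.

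Next I would recognize this matrix as the image under $\Phi$ of a matrix already known to be positive semidefinite. Exactly as in the proof of Theorem \ref{thm42}, Theorem \ref{thm22} guarantees that the block matrix $M$ with diagonal blocks $M_{i,i}=\sum_{k\neq i}A_{k,k}$ and off-diagonal blocks $M_{i,j}=A_{j,i}$, that is $M=I_m\otimes\tr_1 A+A^{\tau}-2D_A$, satisfies $M\ge 0$. Applying $\Phi$ copositively to $M$ produces $[\Phi(M_{j,i})]_{i,j=1}^m$, whose diagonal blocks are $\Phi\bigl(\sum_{k\neq i}A_{k,k}\bigr)=\bigl(\sum_{k\neq i}\tr A_{k,k}\bigr)I_n+\sum_{k\neq i}A_{k,k}$ and whose off-diagonal $(i,j)$ blocks are $\Phi(M_{j,i})=\Phi(A_{i,j})=(\tr A_{i,j})I_n+A_{i,j}$. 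This is precisely the matrix produced in the first step.

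Finally, by Theorem \ref{coro211} the map $\Phi(X)=(\tr X)I+X$ is completely PPT, in particular completely copositive, so from $M\ge 0$ we conclude $[\Phi(M_{j,i})]_{i,j=1}^m\ge 0$, which is the desired inequality. The only delicate point, and the step I would carry out most carefully, is the bookkeeping in the first paragraph: one must verify that the six terms on the two sides combine on the diagonal to give exactly $\Phi(M_{i,i})$ and off the diagonal to give exactly $\Phi(A_{i,j})$. Once this identification is secured, the complete copositivity of $\Phi$ closes the argument immediately, and I expect no genuine obstacle beyond this routine verification.
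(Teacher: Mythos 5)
Your proposal is correct and follows essentially the same route as the paper: the paper's own proof likewise rewrites the inequality as the positivity of the single block matrix with diagonal blocks $\bigl(\sum_{k\neq i}\tr A_{k,k}\bigr)I_n+\sum_{k\neq i}A_{k,k}$ and off-diagonal blocks $(\tr A_{i,j})I_n+A_{i,j}$, and then deduces it from Theorem \ref{thm22} together with the complete copositivity of $\Phi(X)=(\tr X)I+X$ from Theorem \ref{coro211}. Your block-by-block bookkeeping, including the identification of the off-diagonal blocks as $\Phi(M_{j,i})$ with $M_{j,i}=A_{i,j}$, checks out exactly.
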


We remark that Theorems \ref{thm42}, \ref{thm44} and \ref{thm4.4} 
all require the positivity of $A$, by replacing $A$ by $A -\lambda_{\min}(A)I_{mn}$ and $\lambda_{\max}(A)I_{mn}- A$, one can extend 
these theorems to Hermitian matrices. 
Motivated by Theorems \ref{thm42} and  \ref{thm44}, 
we believe highly that it is possible to 
improve Ando's result (\ref{eqando}). 
In other words, 
for every positive semidefinite matrix $A\in \mathbb{M}_m(\mathbb{M}_n)$,  
is there a positive semidefinite matrix $T$  satisfying the following inequality? 
\begin{equation*} \label{eq}
 (\tr A)I_{mn}  +A
\ge  I_m\otimes (\tr_1 A) + (\tr_2 A) \otimes I_n + T. 
\end{equation*}

\section{Extensions on Cauchy--Khinchin's inequality} 
\label{sec5}

In this section, 
we shall provide some applications of 
Theorem \ref{thm42} and Theorem \ref{thm44} 
in the field of numerical inequalities. 
The Cauchy--Khinchin  inequality (see \cite[Theorem 1]{van98}) 
 states that if $X=[x_{ij}]$ is an  $m\times n$ real matrix, then 
\begin{equation} \label{eqck} 
 \left(\sum_{i=1}^m\sum_{j=1}^nx_{ij}\right)^2+mn\sum_{i=1}^m\sum_{j=1}^nx_{ij}^2\ge m\sum_{i=1}^m\left(\sum_{j=1}^nx_{ij}\right)^2+n\sum_{j=1}^n\left(\sum_{i=1}^mx_{ij}\right)^2. 
\end{equation}
It was proved that this inequality has many applications 
 on problems related to the directed graph in combinatorics; 
 see, e.g., \cite{deC1998,LP2001} for more details. 
In 2016, Lin \cite{Lin16} provided 
a simple proof using Ando's inequality (\ref{eqando}).  
Under the similar line, Li, Liu and Huang \cite{HL20} 
proved 
some generalizations by applying (\ref{eqhl}) and (\ref{eqq4}).  
In this section, we shall prove 
the following corollary, which is an easy consequence of Theorem \ref{thm42}.

  	\begin{corollary}  \label{ineqthm52}
 Let $X=[x_{ij}]$ be an $m\times n$ real matrix. Then 
\begin{eqnarray*}    
(m-2)n\sum_{i=1}^m\sum_{j=1}^nx_{ij}^2 + n\sum_{j=1}^n\left(\sum_{i=1}^mx_{ij}\right)^2
\ge     
\left(\sum_{i=1}^m\sum_{j=1}^nx_{ij}\right)^2 +(m-2) \sum_{i=1}^m\left(\sum_{j=1}^nx_{ij}\right)^2,
\end{eqnarray*}
		and 
\begin{eqnarray*}    
m(n-2)\sum_{i=1}^m\sum_{j=1}^nx_{ij}^2 + 
m \sum_{i=1}^m\left(\sum_{j=1}^nx_{ij}\right)^2 
\ge     
\left(\sum_{i=1}^m\sum_{j=1}^nx_{ij}\right)^2 + 
(n-2)\sum_{j=1}^n\left(\sum_{i=1}^mx_{ij}\right)^2. 
  		\end{eqnarray*}
\end{corollary}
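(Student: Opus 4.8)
The plan is to realize both scalar inequalities as the quadratic form of the matrix inequalities in Theorems \ref{thm42} and \ref{thm44}, tested against a single all-ones vector. First I would encode $X$ as a rank-one positive semidefinite block matrix: writing $v_i=(x_{i,1},\dots,x_{i,n})^T\in\mathbb{R}^n$ for the $i$-th row of $X$ and stacking these into $v=(v_1^T,\dots,v_m^T)^T\in\mathbb{R}^{mn}$, set $A=vv^T\in\mathbb{M}_m(\mathbb{M}_n)$. Then $A\ge 0$ with blocks $A_{i,j}=v_iv_j^T$, so both theorems apply. The relevant partial-trace data are $\tr A=\sum_{i,j}x_{i,j}^2$, $\tr_1 A=\sum_i v_iv_i^T$, $\tr_2 A=[\langle v_i,v_j\rangle]_{i,j=1}^m$ (the Gram matrix of the rows), $D_A=A_{1,1}\oplus\cdots\oplus A_{m,m}$, and $\tr_2 D_A=\mathrm{diag}(\|v_1\|^2,\dots,\|v_m\|^2)$.

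The key step is to evaluate every term of Theorems \ref{thm42} and \ref{thm44} in the quadratic form $u^T(\cdot)u$ against the single test vector $u=\mathbf{1}_m\otimes\mathbf{1}_n\in\mathbb{R}^{mn}$, the all-ones vector. Because $u$ factors through the Kronecker product, each term collapses to one of the four Cauchy--Khinchin sums: $u^T A u=(\sum_{i,j}x_{i,j})^2$, $u^T[(\tr A)I_{mn}]u=mn\sum_{i,j}x_{i,j}^2$, $u^T[(\tr_2 A)\otimes I_n]u=n\sum_j(\sum_i x_{i,j})^2$, and $u^T[I_m\otimes(\tr_1 A)]u=m\sum_i(\sum_j x_{i,j})^2$; for the correction terms one finds $u^T D_A u=\sum_i(\sum_j x_{i,j})^2$ and $u^T[(\tr_2 D_A)\otimes I_n]u=n\sum_{i,j}x_{i,j}^2$. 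Substituting these identities into Theorem \ref{thm42} and simplifying (the term $2n\sum_{i,j}x_{i,j}^2$ combines with $mn\sum_{i,j}x_{i,j}^2$ to give $(m-2)n\sum_{i,j}x_{i,j}^2$) produces the first claimed inequality directly.

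For the second inequality I would feed the same test vector into Theorem \ref{thm44}. The only nonroutine computation is the Hadamard term $u^T\bigl[(I_m\otimes\tr_1 A-A)\circ J\bigr]u$. Since $J=(\mathbf{1}_m\mathbf{1}_m^T)\otimes I_n$ carries $1$'s exactly at the within-block diagonal positions, Hadamard multiplication by $J$ retains only those entries, so $u^T(M\circ J)u=\sum_{i,j,r}M_{(i,r),(j,r)}$ for $M=I_m\otimes\tr_1 A-A$; evaluating this sum gives $m\sum_{i,j}x_{i,j}^2-\sum_j(\sum_i x_{i,j})^2$. Substituting into Theorem \ref{thm44} and rearranging yields the second inequality. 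As an independent check, the second inequality is exactly the first applied to $X^T$, since transposition swaps $m\leftrightarrow n$ together with the row and column sums while leaving $\tr A$ and the total sum unchanged.

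I expect the Hadamard evaluation to be the sole delicate point, precisely because $\,\circ J$ acts as a ``restrict to the within-block diagonal'' operator and the index bookkeeping must be done carefully; every other term is a mechanical Kronecker-product computation once $A=vv^T$ and $u=\mathbf{1}_m\otimes\mathbf{1}_n$ are fixed.
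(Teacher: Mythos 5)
Your proof is correct; every quadratic-form identity checks out, including the one delicate step, the Hadamard term $u^T\bigl[(I_m\otimes\tr_1 A-A)\circ J\bigr]u=m\sum_{i,j}x_{ij}^2-\sum_{j}\bigl(\sum_{i}x_{ij}\bigr)^2$, and both inequalities of Corollary \ref{ineqthm52} drop out of Theorems \ref{thm42} and \ref{thm44} exactly as you describe. However, your instantiation is dual to the paper's. The paper freezes the \emph{matrix}: it sets $A=J_m\otimes J_n$ in Theorem \ref{thm42}, obtains the fixed matrix inequality (\ref{eqeq18}), namely $(m-2)nI_{mn}+nJ_m\otimes I_n\ge J_m\otimes J_n+(m-2)I_m\otimes J_n$, and then tests it against the varying vector $\mathrm{vec}\,X$; you instead freeze the \emph{vector} $u=\mathbf{1}_m\otimes\mathbf{1}_n$ and feed the varying rank-one matrix $A=vv^T$, $v=\mathrm{vec}\,X$, into the theorems. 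What each buys: the paper's route yields the intermediate statement (\ref{eqeq18}), which is stronger than the scalar corollary (it holds for every test vector) yet is also nearly trivial on its own, since $I_{mn}$, $J_m\otimes I_n$, $I_m\otimes J_n$ and $J_m\otimes J_n$ commute and are simultaneously diagonalizable --- the paper's own remark makes exactly this point; your route genuinely exercises Theorems \ref{thm42} and \ref{thm44} at block matrices with noncommuting blocks, but extracts only the single scalar consequence needed. Your closing observation that the second inequality is the first applied to $X^T$ is also correct, and it lets you bypass the Hadamard computation and Theorem \ref{thm44} altogether, which is tidier than the paper's unelaborated ``the second is similar''; either of your two derivations of the second inequality suffices.
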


 	\begin{proof}  
	We only prove the first inequality since the second is similar. 
	Let $J_n$ be an $n$-square matrix with all entries 1.  
	Setting $A =J_m\otimes J_n$ in Theorem \ref{thm42}. 
	Clearly, $\tr_1 A = mJ_n $ and $ \tr_2 A = n J_m$. 
	Additionally,  we have $D_A=J_n \oplus \cdots \oplus J_n$, 
	which leads to $(\tr_2 D_A ) \otimes I_n =(nI_m) \otimes I_n$. 
Therefore, 	
\begin{equation} \label{eqeq18} (m-2)nI_{mn} + nJ_m\otimes I_n \ge 
 J_m\otimes J_n + (m-2) I_m\otimes J_n. 
 \end{equation}
Let  $\mathrm{vec}\, X=[x_{11}, \ldots, x_{1n}, x_{21}, \ldots, x_{2n}, \ldots, x_{m1}, \ldots, x_{mn}]^T \in \mathbb{R}^{mn}$ be the column vector determined by matrix $X$. 
Then some calculations give the following equalities: 
  			\begin{eqnarray*} 
  	 (\mathrm{vec}\, X)^TI_{mn}(\mathrm{vec}\, X) &=& \sum_{i=1}^m\sum_{j=1}^nx_{ij}^2,\\  	
(\mathrm{vec}\, X)^T(J_m\otimes I_n)(\mathrm{vec}\, X) 
   &=&\sum_{j=1}^n\left(\sum_{i=1}^mx_{ij}\right)^2, \\ 	
   (\mathrm{vec}\, X)^T (J_m\otimes J_n)(\mathrm{vec}\, X) 
&=& \left(\sum_{i=1}^m\sum_{j=1}^nx_{ij}\right)^2, \\ 
  	 (\mathrm{vec}\, X)^T(I_m\otimes J_n)(\mathrm{vec}\, X)
   &=&\sum_{i=1}^m\left(\sum_{j=1}^nx_{ij}\right)^2. 
  			\end{eqnarray*}
  		Thus the  desired inequality is equivalent to
  	\begin{equation*} \begin{aligned}  
&(\mathrm{vec}\, X)^T ( (m-2)nI_{mn} + nJ_m\otimes I_n  )(\mathrm{vec}\, X) \\
  &\quad \ge 
 (\mathrm{vec}\, X)^T ( J_m\otimes J_n + (m-2) I_m\otimes J_n )(\mathrm{vec}\, X) .
  		\end{aligned} \end{equation*}   
This inequality  follows immediately from the matrix inequality (\ref{eqeq18}). 
 \end{proof}

\noindent 
{\bf Remark.} 
The inequality (\ref{eqeq18}) can also be proved 
by a standard argument on eigenvalues by noting that 
$J_{mn},I_{mn},I_m \otimes J_n$ and 
$J_m \otimes I_n$ mutually commute. 
In addition, setting $A=J_m \otimes J_n$, we see that 
$A=A^{\tau}$  and  $A$ is PPT. 
Unfortunately, our results in Sections \ref{sec2} and \ref{sec3}, e.g., Theorems  \ref{thm22} and \ref{thm24},  will 
lead to some trivial  inequalities.

\section{More inequalities for two by two block matrices} 
\label{sec6}

In this section, 
we will investigate the $2\times 2$ block positive semidefinite matrices. 
First of all, we introduce the notion of majorization. 
For a vector $\bm{x}=(x_1,\ldots ,x_n)\in \mathbb{R}^n$, 
we arrange the coordinates of $\bm{x}$ 
in non-increasing order $x_1^{\downarrow}\ge \cdots \ge x_n^{\downarrow}$ and denote 
$\bm{x}^{\downarrow}=(x_1^{\downarrow},\ldots ,x_n^{\downarrow})$. 
Given $\bm{x},\bm{y}\in \mathbb{R}^n$, we say that $\bm{x}$ is {\it weakly majorized} by $\bm{y}$, 
written as $\bm{x} \prec_w \bm{y}$, if 
\[ \sum_{i=1}^k x_i^{\downarrow} \le \sum_{i=1}^k y_i^{\downarrow} \quad 
\text{for $k=1,2,\ldots ,n$.} \]
We say  that $\bm{x}$ is {\it majorized} by $\bm{y}$, denoted by $\bm{x} \prec \bm{y}$, 
if $\bm{x}\prec_w \bm{y}$ and the sum of all entries of $\bm{x}$ equal to the sum of all 
entries of $\bm{y}$. 
More generally, 
if the dimension of $\bm{x}$ is larger than $\bm{y}$, the inequality 
$\bm{x}\prec \bm{y}$ really means that $\bm{x}\prec (\bm{y},\bm{0})$, 
where the zero vector is added to make the length of $(\bm{y},\bm{0})$ 
the same as that of $\bm{x}$. 
The majorization theory has become a rich research field 
with far-reaching applications to a wide number of areas, 
we refer to the recent monograph \cite{MOA11} or 
\cite[Chapter 10]{Zhang11}
for  comprehensive surveys on this subject.

Let $H$ be a Hermitian matrix. We denote by $\bm{\lambda}(H)$ 
the vector of eigenvalues of $H$ in which the components are 
sorted in decreasing order. 
Historically, the first result of majorization arising in matrix theory is 
usually attributed to Issai Schur, who proved that 
the diagonal elements of  $H$
are majorized by its  eigenvalues, i.e., $\bm{d}(H)\prec \bm{\lambda} {(H)}$. 
This  majorization provided a new and profound understanding on 
Hadamard's determinant inequality; see, e.g., \cite[p. 514]{HJ13}. 
Due to Schur's discovery, a large number of majorization inequalities have been found in the context of matrix analysis. 
For instance,  
if $A=\left[ A_{i,j}\right]_{i,j=1}^m \in \mathbb{M}_m(\mathbb{M}_n)$ is Hermitian, then Schur's inequality implies  
$ \bm{d} (A)\prec  \bm{\lambda} (A_{1,1} \oplus \cdots \oplus A_{m,m})$. 
Furthermore, 
if $A$ is positive semidefinite, 
it is well-known (see \cite[p. 259]{HJ13} and \cite[p. 308]{MOA11}) that 
 \begin{equation} \label{eqm1}
 \bm{\lambda} (A_{1,1} \oplus \cdots \oplus A_{m,m})\prec \bm{\lambda} (A) \prec 
  \bm{\lambda} (A_{1,1}) + \cdots + \bm{\lambda} (A_{m,m}). 
\end{equation}
Moreover, Rotfeld and Thompson \cite[p. 330]{MOA11} obtained an analogous complement, 
\begin{equation} \label{eqm2}
  \bm{\lambda} (A_{1,1} \oplus \cdots \oplus A_{m,m}) 
  \prec \bm{\lambda} (\tr_1 A) \prec 
  \bm{\lambda} (A_{1,1}) + \cdots + \bm{\lambda} (A_{m,m}). 
\end{equation}

In the sequel, we shall provide a comparison between (\ref{eqm1}) and (\ref{eqm2}) under the PPT condition. 
First of all, we recall the following lemma, which was proved 
by Hiroshima \cite{Hiro03} in the language of quantum information theory; see \cite{Lin15laa} for an alternative proof.

\begin{lemma}  \cite{Hiro03,Lin15laa}  \label{lemHiro}
Let $A\in \mathbb{M}_m(\mathbb{M}_n)$ be positive semidefinite. 
\begin{itemize}
\item[(1)] If $I_m \otimes \tr_1 A \ge A$, 
then $\bm{\lambda} (A)\prec \bm{\lambda} (\tr_1 A)$.

\item[(2)] If $(\tr_2 A)\otimes I_n \ge A$, 
then $\bm{\lambda} (A)\prec \bm{\lambda} (\tr_2 A)$.
\end{itemize}
\end{lemma}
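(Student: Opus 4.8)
The plan is to obtain part (2) from part (1) by the symmetrisation $A\mapsto\widetilde A$ already set up in Section \ref{sec3}, and then to concentrate the real work on (1). For (2), apply the tilde operation to the hypothesis $(\tr_2 A)\otimes I_n\ge A$: using $\widetilde{X\otimes Y}=Y\otimes X$ from (\ref{kron}) and $\tr_2 A=\tr_1\widetilde A$ from (\ref{tr1tr2}), the left-hand side becomes $I_n\otimes\tr_1\widetilde A$, so the hypothesis reads $I_n\otimes\tr_1\widetilde A\ge\widetilde A$, which is exactly the hypothesis of (1) for $\widetilde A\in\mathbb{M}_n(\mathbb{M}_m)$. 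Since $\widetilde A$ is unitarily similar to $A$ they have the same spectrum, so (1) gives $\bm{\lambda}(A)=\bm{\lambda}(\widetilde A)\prec\bm{\lambda}(\tr_1\widetilde A)=\bm{\lambda}(\tr_2 A)$, which is (2).

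For (1), write $B:=\tr_1 A\in\mathbb{M}_n$ and read $\bm{\lambda}(B)$ as a vector of length $mn$ by padding with $(m-1)n$ zeros. Since the partial trace is trace preserving we have $\tr A=\tr B$, so both spectra have the same total sum and it is enough to prove the weak majorization $\sum_{i=1}^k\lambda_i^{\downarrow}(A)\le\sum_{i=1}^k\lambda_i^{\downarrow}(B)$ for all $k$. For $k\ge n$ the right-hand side is already $\tr B=\tr A$, so the inequality is automatic from $A\ge 0$; only the range $1\le k\le n$ carries content.

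In that range I would apply Ky Fan's maximum principle on both sides. On the left, $\sum_{i=1}^k\lambda_i^{\downarrow}(A)=\tr(P^{*}A)$ with $P^{*}$ the projection onto the top $k$ eigenvectors of $A$. On the right, the elementary identity $\tr((I_m\otimes Q)A)=\tr(Q\,\tr_1 A)$ (valid for any $Q\in\mathbb{M}_n$) gives $\sum_{i=1}^k\lambda_i^{\downarrow}(B)=\max\{\tr((I_m\otimes Q)A):Q\ \text{a rank-}k\ \text{projection on}\ \mathbb{C}^n\}$. The hypothesis is then invoked on $P^{*}$: from $I_m\otimes B\ge A$ and $P^{*}\ge 0$ one gets $\tr(P^{*}A)\le\tr(P^{*}(I_m\otimes B))=\tr((\tr_1 P^{*})B)$, which reduces the problem to bounding $\tr(RB)$ for the single positive matrix $R:=\tr_1 P^{*}$, subject to $0\le R\le mI_n$ and $\tr R=k$.

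The hard part is exactly this last bound, and I expect it to be the main obstacle. The estimate $R\le mI_n$ is far too lossy: the range of $P^{*}$ may contain a whole fibre $\mathbb{C}^m\otimes\psi$, in which case $R$ acquires an eigenvalue equal to $m$ and $\tr(RB)$ overshoots $\sum_{i=1}^k\lambda_i^{\downarrow}(B)$ by a factor that can be as large as $m$ (the same multiplicity slack appears in the naive Weyl bound $\lambda_i(A)\le\lambda_i(I_m\otimes B)$, whose right-hand side repeats each eigenvalue of $B$ exactly $m$ times). Removing this factor is the entire substance of the lemma: it cannot be done from the diagonal consequence $\lambda_i(A)=\langle v_i,Av_i\rangle\le\langle v_i,(I_m\otimes B)v_i\rangle$ for the eigenvectors $v_i$ of $A$ alone, and one must use the full operator inequality $I_m\otimes B-A\ge 0$ --- equivalently the mutual orthogonality of the eigenvectors of $A$ --- together with the marginal identity $\tr_1 A=B$, to show that the extreme configurations for $R$ never actually occur. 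This is the delicate estimate carried out by Hiroshima \cite{Hiro03} and, by a different route, by Lin \cite{Lin15laa}; an equivalent and often cleaner vehicle is the $(x-t)_{+}$ description of majorization, i.e. proving $\tr(A-tI_{mn})_{+}\le\tr(B-tI_n)_{+}$ for every $t\ge 0$, where the same operator content must again be exploited to beat the factor $m$.
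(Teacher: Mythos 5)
You have correctly spotted that this lemma is not proved in the paper at all: it is imported verbatim from Hiroshima \cite{Hiro03}, with \cite{Lin15laa} cited for an alternative proof, so there is no in-paper argument to match step by step. Your derivation of (2) from (1) is correct and is exactly the paper's own Section \ref{sec3} machinery: applying the tilde map to the hypothesis, using $\widetilde{X\otimes Y}=Y\otimes X$ from (\ref{kron}) and $\tr_2 A=\tr_1\widetilde{A}$ from (\ref{tr1tr2}), the order-preservation of the tilde map, and the unitary similarity of $A$ and $\widetilde{A}$ to transport the spectrum. That part is a clean, complete reduction, and it explains why the paper can state both halves on the strength of Hiroshima's one-sided theorem.

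Part (1), however, is not proved, and you say so yourself. Your setup is sound as far as it goes: Ky Fan's principle, the identity $\tr\bigl((I_m\otimes Q)A\bigr)=\tr\bigl(Q\,\tr_1 A\bigr)$, and the hypothesis used in trace form to get $\sum_{i=1}^{k}\lambda_i^{\downarrow}(A)\le\tr(RB)$ with $R=\tr_1 P^{*}$. But the constraints you retain on $R$ --- namely $0\le R\le mI_n$ and $\tr R=k$ --- genuinely do not imply $\tr(RB)\le\sum_{i=1}^{k}\lambda_i^{\downarrow}(B)$, as your own fibre example $R=m\,\psi\psi^{*}$ shows; the relaxation discards the information that $P^{*}$ is spectral for $A$ and that the operator inequality $I_m\otimes B\ge A$ holds globally, not just against $P^{*}$. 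Closing that gap is precisely the content of Hiroshima's argument (and of Lin's route through the Bapat--Sunder majorization theorem), so your proposal, taken as a standalone proof, establishes (2) conditionally on (1) but leaves (1) resting on the same citations the paper uses. Your diagnosis of \emph{where} the difficulty sits --- the multiplicity-$m$ slack that also defeats the naive Weyl comparison $\lambda_i(A)\le\lambda_i(I_m\otimes B)$ --- is accurate and worth keeping, but a diagnosis is not a proof: the decisive estimate is missing.
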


Our first result in this section is the following majorization inequalities, 
which is a direct consequence of  Lemma \ref{lemHiro} 
by applying inequalities (\ref{eqr5}) and (\ref{eqchoitr2}). 

\begin{theorem}  \label{thm52}
Let  $A \in  \mathbb{M}_m(\mathbb{M}_n)$ be PPT.  Then
 \[  \bm{\lambda} (A)\prec \bm{\lambda} (\tr_1 A) ~~\text{and}~~  \bm{\lambda} (A)\prec \bm{\lambda} (\tr_2 A). \]
Similarly, we have 
\[  \bm{\lambda} (A^{\tau})\prec \bm{\lambda} (\tr_1 A) ~~\text{and}~~ 
\bm{\lambda} (A^{\tau})\prec \bm{\lambda} (\tr_2 A). \]
\end{theorem}

In particular, for the case of $2\times 2$ block matrices, 
we remark here that  inequalities involving $\mathrm{tr}_1$ in 
Theorem \ref{thm52} was partially 
proved by Bourin, Lee and Lin \cite{BLL2012,BLL2013} 
by making use of a simple but useful decomposition lemma 
for positive semidefinite matrices. 
Moreover, other special cases can be found in \cite{LW12} and  \cite[Corollary 5]{TPZ12}.

Over the past few years, 
$2\times 2$ block positive partial transpose matrices play an important role in matrix analysis 
and quantum information, 
such as the separability of mixed states and the subadditivity of $q$-entropies; 
see \cite{Bes13, Lin15om} for related topics and references to the physics literature. 
It is extremely meaningful and significant 
to finding $2\times 2$ block PPT matrices. 
To the author’s best knowledge,  the most famous example is commonly regarded as the Hua matrix; see \cite{Hua1963,Ando2008,XXZ2011,Lin15om,Lin2016laa} for more details.

In 2014, Lin \cite[Proposition 2.2]{Lin14} proved that if 
$\begin{bmatrix}A & B \\ B^* & C \end{bmatrix} 
\in \mathbb{M}_2(\mathbb{M}_n)$ is positive semidefinite, then 
\[ \begin{bmatrix} 
(\tr A) I + A & (\tr B)I + B \\ 
(\tr B^*)I + B^*  & (\tr C)I+C 
 \end{bmatrix}  \]
 is PPT.  
 In 2017, Choi \cite[Theorem 4]{Choi17} also showed an extremely similar result, 
 which states  that 
 \begin{equation} \label{choippt}
 \begin{bmatrix} (\tr A) I +C & (\tr B)I-B \\ (\tr B^*)I-B^* & (\tr C)I+A \end{bmatrix} 
 \end{equation}
 is positive semidefinite. 
In the next theorem, 
we shall prove that the $2\times 2$ block matrix in (\ref{choippt}) 
is further  PPT.

\begin{theorem} \label{thm54}
Let $\begin{bmatrix}A & B \\ B^* & C \end{bmatrix}\in \mathbb{M}_2(\mathbb{M}_n)$ 
be positive semidefinite. Then 
\[   \begin{bmatrix} (\tr A) I +C & (\tr B)I-B \\ (\tr B^*)I-B^* & (\tr C)I+A \end{bmatrix} \]
is PPT. 
\end{theorem}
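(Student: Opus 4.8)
The plan is to lean on the fact that one half of the conclusion is already in hand. Choi's result recalled in (\ref{choippt}) asserts that the displayed matrix, which I denote
\[ \Theta := \begin{bmatrix} (\tr A)I +C & (\tr B)I-B \\ (\tr B^*)I-B^* & (\tr C)I+A \end{bmatrix}, \]
is positive semidefinite. Hence the only new content is to prove that its partial transpose $\Theta^{\tau}$ is positive semidefinite as well. Writing $H=\begin{bmatrix} A & B \\ B^* & C\end{bmatrix}\ge 0$, the partial transpose merely interchanges the two off-diagonal blocks, so
\[ \Theta^{\tau} = \begin{bmatrix} (\tr A)I +C & (\tr B^*)I-B^* \\ (\tr B)I-B & (\tr C)I+A \end{bmatrix}. \]

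The key step is to split $\Theta^{\tau}$ into two positive semidefinite summands. First I would peel off the scalar (trace) part from the matrix part,
\[ \Theta^{\tau} = \begin{bmatrix} (\tr A)I & (\tr B^*)I \\ (\tr B)I & (\tr C)I \end{bmatrix} + \begin{bmatrix} C & -B^* \\ -B & A \end{bmatrix}. \]
The first summand is exactly $(\tr_2 H)^{T}\otimes I_n$; since $\tr_2 H$ is positive semidefinite (being a partial trace of the positive semidefinite $H$), so is its transpose, and tensoring with $I_n$ preserves positivity. For the second summand I would exhibit an explicit unitary congruence carrying $H$ onto it: with the unitary $W=\begin{bmatrix} 0 & I_n \\ -I_n & 0 \end{bmatrix}$ one checks by direct multiplication that
\[ WHW^{*} = \begin{bmatrix} C & -B^* \\ -B & A \end{bmatrix}\ge 0. \]
Adding the two positive semidefinite pieces gives $\Theta^{\tau}\ge 0$, and together with Choi's $\Theta\ge 0$ this shows that $\Theta$ is PPT.

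The step I expect to require the most care is the bookkeeping of the off-diagonal blocks, and it is precisely there that one sees why the partial-transpose direction is the painless one. The congruence $W(\cdot)W^{*}$ sends the block $B$ to $-B^{*}$ in the $(1,2)$ position, which is exactly the entry occurring in $\Theta^{\tau}$; in $\Theta$ itself the $(1,2)$ block is $-B$ rather than $-B^{*}$, so the same congruence fails and one genuinely needs Choi's separate argument for $\Theta\ge 0$. Thus the real work reduces to confirming the sign pattern in $W$ (so that the scalar $w_1w_2=-1$) and to noting that the transposed trace matrix $(\tr_2 H)^{T}$ stays positive semidefinite — both routine once the decomposition is written down.
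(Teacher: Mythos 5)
Your proof is correct and follows essentially the same route as the paper: positivity of the matrix itself is quoted from Choi's result (\ref{choippt}), and positivity of its partial transpose is obtained by the identical decomposition into the trace part $\bigl[\begin{smallmatrix}(\tr A)I & (\tr B^*)I \\ (\tr B)I & (\tr C)I\end{smallmatrix}\bigr]$ (positive by complete positivity of the trace map) plus the block $\bigl[\begin{smallmatrix}C & -B^* \\ -B & A\end{smallmatrix}\bigr]$, which the paper also realizes as the unitary congruence $\bigl[\begin{smallmatrix}0 & -I \\ I & 0\end{smallmatrix}\bigr]\bigl[\begin{smallmatrix}A & B \\ B^* & C\end{smallmatrix}\bigr]\bigl[\begin{smallmatrix}0 & I \\ -I & 0\end{smallmatrix}\bigr]$. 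Your sign bookkeeping is in fact cleaner than the paper's display, which contains a typo ($C$ where $A$ should appear in the lower-right block).
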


\begin{proof}
In view of the positivity of (\ref{choippt}), 
we only need to prove 
\[  \begin{bmatrix} (\tr A) I +C & (\tr B^*)I-B^* \\ (\tr B)I-B & (\tr C)I+A \end{bmatrix}\ge 0. \]
Note that  
\[ \begin{bmatrix}C & -B^* \\ -B & A \end{bmatrix} = 
\begin{bmatrix}0 & -I \\ I & 0 \end{bmatrix}
\begin{bmatrix}A & B \\ B^* & C \end{bmatrix}
\begin{bmatrix}0 & I \\ -I & 0 \end{bmatrix}\ge 0. \]
It suffices to show 
\[ \begin{bmatrix}(\tr A)I & (\tr B^*)I \\ (\tr B)I & (\tr C)I \end{bmatrix}\ge 0, \]
which follows from the complete positivity of trace map.  
\end{proof}

A norm $\lVert \cdot \rVert$ on $\mathbb{M}_n$ is called 
unitarily invariant if 
$\lVert UAV \rVert = \lVert A \rVert$ 
for any $A\in \mathbb{M}_n$ 
and any unitary matrices $U,V\in \mathbb{M}_n$. 
The unitarily invariant norm of a matrix is closely related to  
its singular values; see, e.g., \cite[pp. 372--376]{Zhang11} and 
\cite[pp. 91--98]{Bha97}.  
Recall in Section \ref{sec4} that 
$\Phi (X)=(\tr X)I+X$ and $\Psi (X)=(\tr X)I-X$. 
In the sequel, we shall present some  inequalities involving 
unitarily invariant norms and singular values for these two maps.

\begin{corollary} \label{coro55}
Let $\begin{bmatrix}A & B \\ B^* & C \end{bmatrix} 
\in \mathbb{M}_2(\mathbb{M}_n) $ 
be positive semidefinite. Then
\[ 2 \big\lVert   \Phi (B) \big\rVert \le \left\lVert   \Phi (A)+ \Phi (C) \right\rVert  \] 
and 
\[ 2 \big\lVert  \Psi (B) \big\rVert \le \left\lVert  \Phi (A)+ \Phi(C) \right\rVert  \] 
for any unitarily invariant norm. 
\end{corollary}

\begin{proof} 
Invoking a  theorem of von Neumann (see \cite[p. 375]{Zhang11}), it is sufficient to prove 
\[  2\bm{s}((\tr B)I \pm  B) \prec_{w} 
\bm{s}(\bigl( \tr (A+C)\bigr)I + A+C),  \]
where $\bm{s}(X)$ is the vector consisting of singular values of $X$. 
We next prove the second inequality only. 
There is a well-known fact that if 
$X=\left[\begin{smallmatrix}Y & Z \\ Z^* & W \end{smallmatrix}\right]$ is positive semidefinite, then 
$ 2 s_i (Z) \le s_i (X) $ for every $i$. Thus 
\[   2\bm{s}((\tr B)I -  B) \prec_{w} 
\bm{s}   \begin{bmatrix} (\tr A) I +C & (\tr B)I-B \\ (\tr B^*)I-B^* & (\tr C)I+A \end{bmatrix}   \prec_{w} 
\bm{s}(\bigl( \tr (A+C)\bigr)I + A+C), 
 \]
where we used Theorem \ref{thm54} and Theorem \ref{thm52}. 
\end{proof}

The second partial trace inequality in Theorem \ref{thm52}
 yields the following corollary. 

\begin{corollary}
Let $H=\begin{bmatrix}A & B \\ B^* & C \end{bmatrix} 
\in \mathbb{M}_2(\mathbb{M}_n)$ 
be positive semidefinite. Then
\[   2\big\lVert \Phi (B)\big\rVert \le  (n+1)
\left\lVert  \mathrm{tr}_2 H  \right\rVert   \]
for any unitarily invariant norm. 
\end{corollary}

At the end of this paper, we conclude with the
 following  inequality involving singular values, 
 which is a stronger  inequality than Corollary \ref{coro55}.

\begin{theorem} \label{thm37}
Let $\begin{bmatrix}A & B \\ B^* & C \end{bmatrix} 
\in \mathbb{M}_2(\mathbb{M}_n)$ 
be positive semidefinite. Then for $j=1,2,\ldots, n$, 
\[ 2s_j \bigl( \Phi (B) \bigr) \le s_j \bigl( 
\Phi (A) + \Phi (C) \bigr), \]
and 
\[ 2s_j \bigl( \Psi (B) \bigr) \le s_j \bigl(   \Phi (A) + \Phi (C) \bigr). \]
\end{theorem}

 Note that the first inequality in 
Theorem \ref{thm37} 
was  proved by Lin as a main result in  \cite{Lin16b}. 
To some extent, 
our proof of Theorem \ref{thm37} grows out from  \cite{Lin16b}
with some simplifications.  
To proceed the proof, 
we need to present the following two lemmas.

\begin{lemma} \label{lem39}  \cite[p. 262]{Bha97} 
For any $M,N\in \mathbb{M}_{n\times m}$ and $j=1,2,\ldots ,n$, we have 
\[ 2s_j(MN^*) \le s_j(M^*M +N^*N). \]
\end{lemma}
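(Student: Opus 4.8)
The plan is to deduce this from the single observation that $M^{*}M+N^{*}N$ and the block Gram matrix built from $M$ and $N$ share the same nonzero singular values, after which the inequality becomes an immediate application of the off-diagonal singular value bound for positive semidefinite block matrices already invoked in the proof of Corollary~\ref{coro55}.

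First I would set $X=\begin{bmatrix} M \\ N \end{bmatrix}\in\mathbb{M}_{2n\times m}$ and form its two Gram products
\[
X^{*}X=M^{*}M+N^{*}N\in\mathbb{M}_{m},\qquad
XX^{*}=\begin{bmatrix} MM^{*} & MN^{*} \\ NM^{*} & NN^{*}\end{bmatrix}\in\mathbb{M}_{2n}.
\]
Both are positive semidefinite, and $X^{*}X$ and $XX^{*}$ always have the same nonzero eigenvalues with multiplicity; since for a positive semidefinite matrix the singular values coincide with the eigenvalues, listing them in nonincreasing order and padding with zeros gives
\[
s_j(M^{*}M+N^{*}N)=s_j\!\left(\begin{bmatrix} MM^{*} & MN^{*} \\ NM^{*} & NN^{*}\end{bmatrix}\right)\qquad\text{for every } j.
\]
This is exactly the step that replaces the ``wrong'' diagonal blocks $M^{*}M,N^{*}N$ by the ``right'' ones $MM^{*},NN^{*}$ without disturbing the relevant singular values, and it is the conceptual heart of the argument.

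Next I would apply the well-known fact recorded in the proof of Corollary~\ref{coro55}: if $\begin{bmatrix} Y & Z \\ Z^{*} & W\end{bmatrix}$ is positive semidefinite, then $2s_j(Z)\le s_j\!\left(\begin{bmatrix} Y & Z \\ Z^{*} & W\end{bmatrix}\right)$ for every $j$. Taking $Y=MM^{*}$, $Z=MN^{*}$, $W=NN^{*}$ and combining with the displayed identity yields
\[
2s_j(MN^{*})\le s_j\!\left(\begin{bmatrix} MM^{*} & MN^{*} \\ NM^{*} & NN^{*}\end{bmatrix}\right)=s_j(M^{*}M+N^{*}N),
\]
which is precisely the claim.

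The only genuine subtlety, and the step I would be most careful about, is the bookkeeping of sizes: $M^{*}M+N^{*}N$ is $m\times m$ while the block matrix is $2n\times 2n$, so the identity of singular values must be read with the usual convention of padding the shorter list by zeros, and the conclusion is asserted for the indices $j=1,\dots,n$ for which the $n\times n$ matrix $MN^{*}$ can have nonzero singular values. Everything else is formal: the positivity of $XX^{*}$ is automatic, and the off-diagonal block bound $2s_j(Z)\le s_j(\cdot)$ is quoted. I would remark in passing that this lemma is just the Bhatia--Kittaneh arithmetic--geometric-mean inequality for singular values, so the route above is a short self-contained path to the cited result.
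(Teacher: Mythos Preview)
The paper does not give its own proof of this lemma; it is cited from Bhatia with a page reference and no argument, so there is nothing in the paper to compare your attempt against.

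Your argument is correct and is in fact a standard route to the Bhatia--Kittaneh arithmetic--geometric mean inequality: the identity $s_j(X^{*}X)=s_j(XX^{*})$ swaps $M^{*}M+N^{*}N$ for the $2\times2$ Gram block, and the off-diagonal singular value bound finishes it. Your remarks about zero-padding and the index range $j\le n$ are also handled correctly.

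One caveat is worth flagging. The ``well-known fact'' you import from the proof of Corollary~\ref{coro55} is, via the very Gram-matrix identification you use in your first step, \emph{equivalent} to Lemma~\ref{lem39}: any positive semidefinite $\left[\begin{smallmatrix}Y&Z\\Z^{*}&W\end{smallmatrix}\right]$ can be written as $\left[\begin{smallmatrix}P\\Q\end{smallmatrix}\right][P^{*}\ Q^{*}]$, and then the off-diagonal bound reads $2s_j(PQ^{*})\le s_j(P^{*}P+Q^{*}Q)$, which is the lemma again. So describing your route as ``self-contained'' is a slight overstatement---you have reduced the lemma to a reformulation of itself that the paper also quotes without proof. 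If you want to close the loop, the off-diagonal bound has the one-line justification
\[
H=\begin{bmatrix}0&2Z\\2Z^{*}&0\end{bmatrix}+\begin{bmatrix}Y&-Z\\-Z^{*}&W\end{bmatrix},
\]
where the second summand is positive semidefinite (conjugate $H$ by $\mathrm{diag}(I,-I)$), so Weyl's monotonicity gives $\lambda_j(H)\ge\lambda_j\left[\begin{smallmatrix}0&2Z\\2Z^{*}&0\end{smallmatrix}\right]=2s_j(Z)$ for $j\le n$. With that line inserted, your proof is genuinely independent.
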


\begin{lemma} \label{lem38}
For any $M,N\in \mathbb{M}_{n\times m}$ and $j=1,2,\ldots ,n$, we have 
\begin{align*}
\lambda_j (M^*M + N^* N) &\le \lambda_j (MM^* + NN^*) \\
&\quad + \frac{1}{2} \tr (M^*M +N^*N -M^*N-N^*M).
\end{align*}
\end{lemma}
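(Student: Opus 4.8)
The plan is to reduce the inequality to Weyl's perturbation and monotonicity inequalities after splitting the left-hand side into a sum of two positive semidefinite pieces. The starting point is the parallelogram identity
\[ M^*M + N^*N = \tfrac{1}{2}(M+N)^*(M+N) + \tfrac{1}{2}(M-N)^*(M-N), \]
which I would verify by a direct expansion. Both summands are positive semidefinite, so applying Weyl's inequality $\lambda_j(P+Q)\le \lambda_j(P)+\lambda_{\max}(Q)$ gives
\[ \lambda_j(M^*M + N^*N) \le \lambda_j\bigl(\tfrac{1}{2}(M+N)^*(M+N)\bigr) + \lambda_{\max}\bigl(\tfrac{1}{2}(M-N)^*(M-N)\bigr). \]
It then suffices to dominate the first term on the right by $\lambda_j(MM^*+NN^*)$ and the second term by the trace correction appearing in the statement.

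For the first term, the key observation is that $X^*X$ and $XX^*$ share the same nonzero eigenvalues, so $\lambda_j\bigl((M+N)^*(M+N)\bigr)=\lambda_j\bigl((M+N)(M+N)^*\bigr)$. Expanding $(M+N)(M+N)^*=MM^*+NN^*+(MN^*+NM^*)$ and using the positivity of $(M-N)(M-N)^*$, which rearranges to $MM^*+NN^*\ge MN^*+NM^*$, I obtain $(M+N)(M+N)^*\le 2(MM^*+NN^*)$. Monotonicity of eigenvalues under the Löwner order then yields $\tfrac{1}{2}\lambda_j\bigl((M+N)^*(M+N)\bigr)\le \lambda_j(MM^*+NN^*)$, which is exactly the first term of the claimed bound. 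This step also resolves the dimension mismatch between the $m\times m$ matrix $M^*M+N^*N$ and the $n\times n$ matrix $MM^*+NN^*$, since passing from $X^*X$ to $XX^*$ is precisely what changes the ambient size and relocates the comparison to the $n\times n$ space.

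For the second term, I would note that $\lambda_{\max}\bigl(\tfrac{1}{2}(M-N)^*(M-N)\bigr)=\tfrac{1}{2}s_1(M-N)^2\le \tfrac{1}{2}\sum_i s_i(M-N)^2=\tfrac{1}{2}\tr\bigl((M-N)^*(M-N)\bigr)$, and that $(M-N)^*(M-N)=M^*M+N^*N-M^*N-N^*M$, so this trace is exactly the correction term in the statement. Combining the two bounds completes the proof. The main obstacle I anticipate is purely bookkeeping: reconciling the different sizes of the two sides and keeping the eigenvalue indexing consistent for $j$ up to $n$ (with the usual zero-padding convention when $m<n$, under which the inequality is trivial for the surplus indices). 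The substantive ingredients, namely Weyl's inequality, Löwner monotonicity, and the $X^*X$/$XX^*$ spectral identity, are all standard.
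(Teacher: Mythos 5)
Your proof is correct and takes essentially the same approach as the paper's: both rest on the parallelogram identity for $M\pm N$, Weyl's eigenvalue inequality, the coincidence of the nonzero spectra of $X^*X$ and $XX^*$, and the bound $\lambda_{\max}\le \tr$ applied to the positive semidefinite matrix $(M-N)^*(M-N)$. The only difference is organizational---you run the chain forward from $\lambda_j(M^*M+N^*N)$ and package one step as L\"owner monotonicity, while the paper starts from $2\lambda_j(MM^*+NN^*)$ and invokes Weyl a second time---which is a cosmetic rearrangement of the same argument.
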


\begin{proof}
By a direct computation, we can get 
\begin{align*}
2\lambda_j (MM^*+NN^*) 
&= \lambda_j \bigl( (M+N)(M+N)^* + (M-N)(M-N)^*\bigr) \\
&\ge \lambda_j \bigl( (M+N)(M+N)^*\bigr)+ \lambda_n \bigl( (M-N)(M-N)^*\bigr) \\
&\ge  \lambda_j \bigl( (M+N)^*(M+N)\bigr) \\
&= \lambda_j \bigl( 2(M^*M+N^*N)-(M-N)^*(M-N)\bigr) \\
&\ge 2\lambda_j \bigl( (M^*M+N^*N) \bigr) -\lambda_1 \bigl( (M-N)^*(M-N)\bigr) ,
\end{align*}
where the first and 
last inequality hold by Weyl's eigenvalue inequality (see \cite[p. 63]{Bha97}). 
Moreover, the positivity of $(M-N)^*(M-N)$ leads to 
\begin{align*} 
\lambda_1 \bigl( (M-N)^*(M-N)\bigr)&\le \tr \bigl( (M-N)^*(M-N)\bigr) \\
&=  \tr (M^*M +N^*N -M^*N-N^*M). 
\end{align*}
This completes the proof. 
\end{proof}

\noindent
{\bf Proof of Theorem \ref{thm37}}~~
Since $\begin{bmatrix}A & B \\ B^* & C \end{bmatrix}$ 
is positive semidefinite, we may write
\[ \begin{bmatrix}A & B \\ B^* & C \end{bmatrix} =
 \begin{bmatrix}MM^* & MN^* \\ NM^* & NN^*\end{bmatrix}   \]
for some $M,N\in \mathbb{M}_{n\times 2n}$. Then the desired inequality is the same as 
\[ 2s_j \bigl(  \tr (MN^*)I \pm  MN^*\bigr) 
\le s_j \bigl(  \tr (MM^*+NN^*)I + MM^*+NN^*\bigr).  \]
By the Weyl's inequality of singular value and Lemma \ref{lem39}, we have 
\begin{align*}
2s_j \bigl(  \tr (MN^*)I \pm  MN^*\bigr)  
&\le 2s_j (\pm MN^*) +2s_1\bigl( (\tr MN^*)I\bigr) \\
& = 2s_j (MN^*) +2 |\tr (MN^*)| \\
& \le s_j (M^*M + N^*N) +2 |\tr (MN^*)|, 
\end{align*}
On the other hand, we observe that   
\begin{align*} 
s_j \bigl(  \tr (MM^*+NN^*)I + MM^*+NN^*\bigr) =\lambda_j (MM^* + NN^*) +\tr (MM^*+NN^*). 
\end{align*}
It is sufficient to prove that for every $M,N\in \mathbb{M}_{n\times m}$,
\begin{equation} \label{eqqq}
s_j (M^*M + N^*N) +2 |\tr (MN^*)| \le \lambda_j (MM^* + NN^*) +\tr (MM^*+NN^*).
\end{equation}
We may assume without loss of generality that $\tr (MN^*)\ge 0$ in (\ref{eqqq}), 
since it is clear that (\ref{eqqq}) is invariant when we replace $M$ with $e^{i\theta}M$ 
for every $\theta \in [0,2\pi]$. Therefore,  
\begin{eqnarray}
& &s_j (M^*M + N^*N) + |\tr (MN^*)|  \notag  \\ 
&= &\lambda_j (M^*M + N^*N) + \frac{1}{2}\tr (M^*N+N^*M) \notag  \\
& \le & \lambda_j(MM^* + NN^*) + \frac{1}{2}\tr (M^*M + N^*N), \label{eqeq20}
\end{eqnarray}
where the last inequality holds by Lemma \ref{lem38}. 
Note that 
\[   \begin{bmatrix}\tr MM^* & \tr MN^* \\ \tr NM^* & \tr NN^*\end{bmatrix} \]
is a positive semidefinite matrix, we have 
\begin{equation}  \label{eqeq21}
 |\tr (MN^*)| = \frac{1}{2} \tr(MN^* + NM^*) 
\le \frac{1}{2}\tr (MM^* + NN^*). \end{equation}
The desired inequality (\ref{eqqq})  follows by combining  (\ref{eqeq20}) and (\ref{eqeq21}).   $\blacksquare$

\section*{Acknowledgments} 
The paper is dedicated to Prof. Weijun Liu, 
my teacher on the occasion of his 60th birthday, October 22 of the lunar calendar in 2021.   
This work was supported by  NSFC (Grant No. 11931002).  
The author would like to thank  Prof. Minghua Lin 
for bringing the topic on partial traces  
to his attention. 
Thanks also go to Prof. Fuzhen Zhang and Prof. Yuejian Peng 
for reading an earlier draft of the paper, 
Prof. Xiaohui Fu for the inspiring discussions over the years, 
and the anonymous referee for 
helpful suggestions on improving the presentation of this paper.

\end{document}